\newtheorem*{thm*}{Theorem}
\newcommand{\pa}{\partial}
\newcommand{\la}{\label}
\newcommand{\fr}{\frac}
\newcommand{\R}{{\mathbb R}}
\newtheorem{thm}{Theorem}
\newtheorem{prop}{Proposition}
\newtheorem{lemma}{Lemma}
\newtheorem{cor}{Corollary}
\theoremstyle{definition}
\newtheorem{rem}{Remark}
\newcommand{\ov}{\overline}
\renewcommand{\div}{{\mbox{div}\,}}
\newcommand{\rmd}{{\rm d}}
\def\curl{{\rm curl}}
\def\div{\text{div}}
\newcommand{\be}{\begin{equation}}
\newcommand{\ee}{\end{equation}}
\newcommand{\ba}{\begin{array}{l}}
\newcommand{\ea}{\end{array}}
\title[]{Flexibility and rigidity of free boundary MHD equilibria}
\author{Peter Constantin}
\address{Department of Mathematics, Princeton University, Princeton, NJ 08544}
\email{const@math.princeton.edu}
\author{Theodore D. Drivas}
\address{Department of Mathematics, Stony Brook University,
Stony Brook, NY, 11794}
\email{tdrivas@math.stonybrook.edu}
\author{Daniel Ginsberg}
\address{Program in Applied and Computational Mathematics, Princeton University, Princeton, NJ 08544}
\email{ dg42@princeton.edu}
\date{today}
\date{today}
\begin{document}

\begin{abstract}
We study stationary free boundary configurations of an ideal incompressible magnetohydrodynamic fluid possessing nested flux surfaces.  In 2D simply connected domains, we prove that if the magnetic field and velocity field are never commensurate, the only possible domain for any such equilibria is a disk, and the velocity and magnetic field are circular.  We give examples of non-symmetric equilibria occupying a domain of any shape by imposing an external magnetic field generated by a singular current sheet  charge distribution (external coils).   Some results carry over to 3D axisymmetric solutions. These results highlight the importance of external magnetic fields for the existence of asymmetric equilibria.
\end{abstract}

\maketitle

We consider an ideal incompressible magnetohydrodynamic fluid occupying a domain $\Omega \subset \mathbb{R}^d$, $d=2,3$ with free boundary $\partial \Omega$ as a model for plasma equilibria.
We assume that the region $\Omega$ is surrounded by vacuum containing a
 current $J_{\mathsf{ext}}$ (scalar valued in dimension two and vector valued in dimension three) supported away from $\Omega$.  The external field $ B_{\mathsf{ext}}$ satisfies
\begin{alignat}{2}\label{mhd11}
\ \qquad \qquad {\rm curl} \ B_{\mathsf{ext}}  &= J_{\mathsf{ext}}\qquad\qquad\   &&\text{in} \ \ \mathbb{R}^d \setminus \Omega,\\
  \nabla \cdot B_{\mathsf{ext}}  &= 0&& \text{in} \ \ \mathbb{R}^d \setminus \Omega,\label{mhd2}
\end{alignat}
 where ${\rm curl} = \nabla^\perp \cdot$ with $\nabla^\perp :=(-\partial_2,\partial_1)$ in two dimensions.
Plasma equilibria are then governed by
\begin{alignat}{2}\label{mhd1}
u \cdot \nabla u  - B\cdot \nabla B&= -\nabla p && \text{in} \ \ \Omega,\\\label{mhd2}
u \cdot \nabla B - B\cdot \nabla u &= 0&& \text{in} \ \ \Omega, \\ \label{mhd3}
\nabla \cdot B &= 0 &&\text{in} \ \ \Omega, \\ \label{mhd4}
\nabla \cdot u &= 0 &&  \text{in} \ \ \Omega, \\\label{mhd5}
p&=\tfrac{1}{2}|B_{\mathsf{ext}}  |^2 \qquad\quad && \text{on} \ \ \partial \Omega, \\\label{mhd6}
B \cdot \hat{n}&=B_{\mathsf{ext}}  \cdot \hat{n}&&\text{on} \ \ \partial \Omega,\\\label{mhd7}
 u \cdot \hat{n}&=0 && \text{on} \ \ \partial \Omega,
\end{alignat}
where  $\hat{n}$ is the unit outward normal to the boundary.
The conditions \eqref{mhd5}--\eqref{mhd7} arise from demanding that \eqref{mhd1}--\eqref{mhd4} hold globally in the weak sense. Specifically, condition \eqref{mhd5} comes from noting that away from the support of $J_{\mathsf{ext}}$, the external magnetic field satisfies
\be
  -B_{\mathsf{ext}} \cdot \nabla B_{\mathsf{ext}} = -\frac{1}{2}\nabla  |B_{\mathsf{ext}}|^2 \qquad \   \text{in} \ \ \mathbb{R}^d \setminus {\rm supp}(J_{\mathsf{ext}}).
\ee
Therefore, the pressure $p_{\mathsf{ext}}$ in the vacuum is simply $\tfrac{1}{2}|B_{\mathsf{ext}}|^2$ and  \eqref{mhd5} comes from demanding that the jump $[p]:=p-p_{\mathsf{ext}}$ in the pressure across the free boundary is zero.
The condition \eqref{mhd6} forces the penetrative component of the magnetic field to be continuous across the surface.
We will consider here only the case where the external field does not penetrate the plasma, i.e.
\be
B_{\mathsf{ext}}  \cdot \hat{n}=0\quad\quad   \text{on} \ \ \partial \Omega.
\ee
The reason we restrict to this case is that, according to \eqref{mhd6}, $B \cdot \hat{n}=0$ on the boundary so that these free boundary equilibria are, in fact, a subclass of fixed boundary equilibria on a given domain.

In the formulation \eqref{mhd11}--\eqref{mhd7}, we have allowed for an external magnetic field sourced by an imposed current distribution $J_{\mathsf{ext}}$.  The motivation for including such effects comes from the
plasma confinement fusion program.
Free boundary plasma equilibria which are confined to finite volumes and surrounded by vacuum are the principal objects in this study. Their creation and maintenance is a huge scientific and engineering challenge, with the Tokamak, Spheromak and Stellarator being the most popular designs to accomplish this. In all situations, but in particular the Stellarator, it is necessary to keep the plasma in a contoured shape which may not naturally be stable or even stationary.  As such, it is necessary to impose external currents driven through coils surrounding the device to hold the configuration steady.

In this work, we address the question of which configurations are naturally stationary  without external coils  (Theorem \ref{rigthm2d}) and how to prescribe or a coil geometry (Theorem \ref{nonsymmthmext})  in order to hold contortions of these steady.  All of this is done primarily in two dimensions and included possible non-trivial flow velocity. Some of the results can be extended to certain three-dimensional geometries with symmetries. For $z$--independent axisymmetric solutions occupying infinite cylindrical domains, we prove a similar rigidity result as in $2d$ (Theorem \ref{3dthm1}). For $\varphi$--independent axisymmetric solutions possibly occupying toroidal domains, we prove the solution and domain must have up-down symmetry  (Theorem \ref{3dthm2}).\footnote{We remark that there are well known ``virial"  theorems (see Appendix \ref{virial}) which also expose a certain form of rigidity. In particular, these results can be used to rule out any nontrivial solutions without external forcing if one makes the additional assumption that the pressure is non-negative.  In particular, for a compressible gas (magnetized or not), the pressure is given by an equation of state involving the density and is thus positive.   As such, this theorem implies non-existence of free boundary equilibria.  On the other hand, for the incompressible medium studied here, the assumption that the pressure is positive is not natural and there indeed exist many free boundary steady states.
}

Finally we mention a conjecture of Grad
\cite{GR58,G67,G85} which asserts that in three dimensions, the only stationary equilibria, fibered by flux surfaces and without flow velocity, must inherit Euclidean symmetries (e.g. axisymmetry in the context of a toroidal plasma body).  We prove the analogous statement is, in fact, true in two-dimensions for free boundary solutions with a single magnetic nullpoint if one does not impose external currents (Theorem \ref{rigthm2d}). However, fixed boundary configurations (which can be made free boundary with the use of appropriate coils according to Theorem \ref{nonsymmthmext}) are flexible, see \cite{CDG21a}.  If similar results hold in three-dimensions, it would provide theoretical justification for the Stellarator program.

\section{Two dimensions}
In this section, we consider simply connected bounded domains $\Omega\subset \mathbb{R}^2$. In light of the incompressibility of $u$ and $B$, equations \eqref{mhd3} and \eqref{mhd4}, we can introduce a scalar streamfunction $\psi$ and magnetic potential $A$
\be
u = \nabla^\perp \psi , \qquad B = \nabla^\perp A.
\ee

Note that, in order to enforce the tangency of the velocity and magnetic field at the free boundary, \eqref{mhd6} and \eqref{mhd7},  we require that $\partial \Omega$ is an isosurface of $\psi$ and $A$ simultaneously.
An equivalent dynamical formulation consists of the vorticity equation and the advection-diffusion equation for the magnetic potential.
In this formulation, equations \eqref{mhd1}--\eqref{mhd4} are replaced by
\begin{align}\label{vort1}
 \{\psi ,\Delta \psi\}   -\{A ,\Delta A\}    &=0,\\ \label{vort2}
\{\psi,A\}&= 0,
\end{align}
where $\{a,b\}:= \nabla^\perp a \cdot \nabla b$ is the usual Poisson bracket, which is anti-symmetric in its arguments.
Our first result is a characterization of non-degenerate stationary solutions, with either free or fixed boundary. See Fig. \ref{bstruct} for a depiction of the type of magnetic field structures we consider.

    \begin{figure}[h!]\label{bstruct}
      \includegraphics[height=.3 \linewidth]{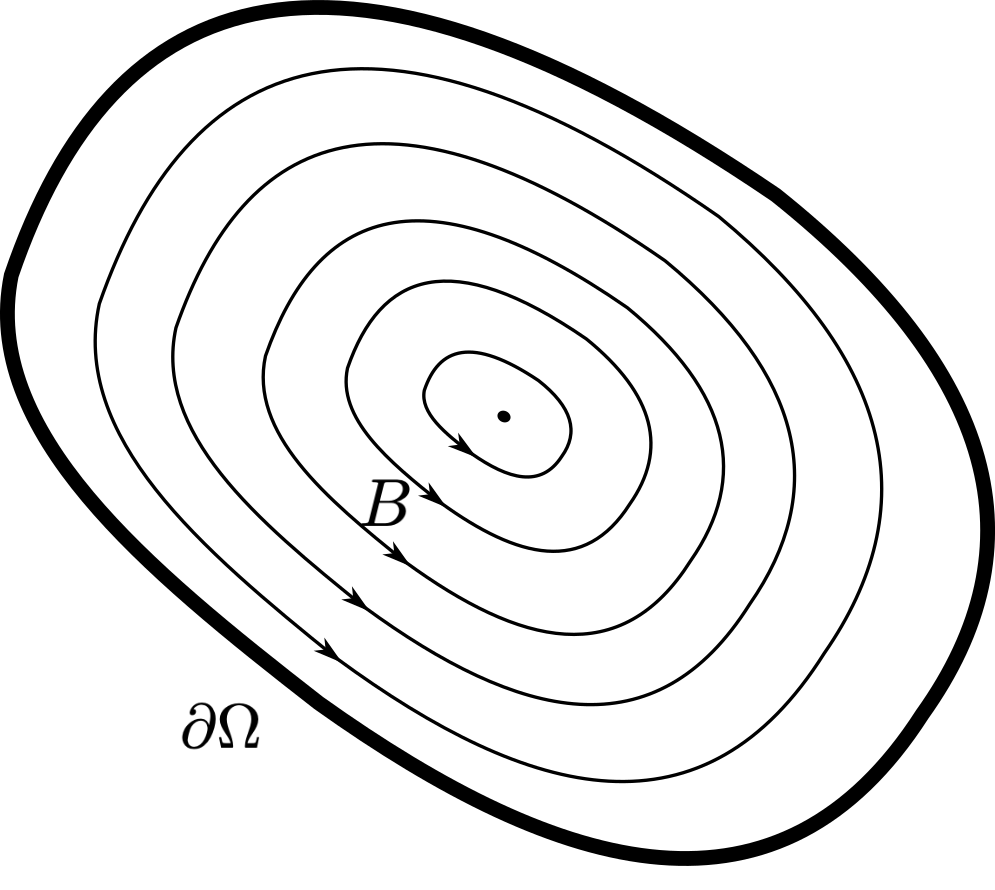}
    \end{figure}

\begin{prop}\label{structuretheorem} Let $\Omega$ be a  simply connected  bounded domain.
Let $u=\nabla^\perp \psi$ and $B=\nabla^\perp A$ be a $C^k$ with $k\geq 2$ steady solution of the system \eqref{mhd1}--\eqref{mhd7} without requiring the free boundary condition \eqref{mhd5}.  Suppose that $B$ is non-vanishing except at a single point with $k$-times differentiable travel time
\be\label{traveltime}
\mu(a) := \oint_{\{ A=a\}} \frac{\rmd\ell}{|\nabla A|} .
\ee
Then, there exists a $C^k$ function $G:\mathbb{R}\to \mathbb{R}$ such that
\be\label{Arel}
\psi = G(A),
\ee
and there exist a $C^{k-1}$ function  $F:\mathbb{R}\to \mathbb{R}$ such that $\psi$ solves
\begin{align}\label{psieqn}
\left(1-|G'(A)|^2\right)\Delta A - G'(A) G''(A) |\nabla A|^2 &= F(A) \qquad \quad \text{\rm in} \ \ \Omega,\\
A&=0 \qquad\quad \quad \ \   \text{\rm on} \ \ \partial \Omega.
\end{align}
Moreover, there exists   a $C^{k-1}$ function $H:\mathbb{R}\to \mathbb{R}$ such that the pressure satisfies
\be\label{pressure}
p+ \tfrac{1}{2}|u|^2 - \tfrac{1}{2}|B|^2 = H(A).
\ee
\end{prop}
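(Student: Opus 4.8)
The plan is to convert the two algebraic identities \eqref{vort1}--\eqref{vort2} into the stated functional relations by exploiting the fact that the level sets of $A$ foliate $\Omega$ minus the single magnetic null; essentially everything is then bracket algebra, and the only genuinely analytic point is the regularity of $G$, $F$, $H$ across the critical value of $A$, which is exactly where the travel-time hypothesis \eqref{traveltime} is used.

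First I would fix the geometry. Since we work in the regime $B_{\mathsf{ext}}\cdot\hat n=0$, the boundary conditions \eqref{mhd6}--\eqref{mhd7} say that $\nabla^\perp A$ and $\nabla^\perp\psi$ are tangent to $\partial\Omega$, so $A$ and $\psi$ are each constant on the (connected) boundary; normalizing, $A\equiv 0$ on $\partial\Omega$. Because $B=\nabla^\perp A$ vanishes only at the point $x_0$, the function $A$ has exactly one critical point in $\Omega$; since $A$ is nonconstant with constant boundary value, $x_0$ must be an interior extremum, say a maximum (replace $A$ by $-A$ otherwise), and then $0<A<A(x_0)$ on $\Omega\setminus\{x_0\}$. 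Using that $\Omega$ is simply connected and $A$ has no other critical point, for each $a\in(0,A(x_0))$ the level set $\{A=a\}$ is a single $C^k$ Jordan curve encircling $x_0$, and these curves foliate $\Omega\setminus\{x_0\}$; the travel time $\mu(a)$ is then well defined on this interval and $C^k$ by hypothesis.

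Next, the relation $\psi=G(A)$. Equation \eqref{vort2}, $\{\psi,A\}=0$, says that $\psi$ is constant along the integral curves of $\nabla^\perp A$, i.e. along each leaf $\{A=a\}$; since the leaves are labelled by $a$, this produces a continuous function $G$ with $\psi=G(A)$, which is \eqref{Arel}. Away from the critical value $a_\ast:=A(x_0)$, evaluating $\psi$ along a $C^k$ curve transverse to the foliation (for instance a flow line of $\nabla A/|\nabla A|^2$, along which $A$ grows at unit rate) shows $G$ is $C^k$ there. The hard part is $C^k$ regularity of $G$ at $a_\ast$: near the magnetic null $x_0$ one must upgrade the a priori only continuous matching of $\psi$ to $A$ to a $C^k$ matching, and this is controlled by the behaviour of the level-set geometry near a center, encoded precisely by the travel time, so the $C^k$ assumption on $\mu$ is what makes this step go through (this is the MHD analogue of the classical fact that the vorticity--streamfunction profile of a planar steady Euler flow with a single stagnation point is as smooth as its period function).

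Finally I would extract the pressure relation and the elliptic equation. Writing $u=\nabla^\perp\psi=G'(A)\nabla^\perp A=G'(A)B$ and using the planar identity $v\cdot\nabla v=\tfrac12\nabla|v|^2-(\Delta\phi)\nabla\phi$ for $v=\nabla^\perp\phi$, equation \eqref{mhd1} becomes $\nabla\big(p+\tfrac12|u|^2-\tfrac12|B|^2\big)=(\Delta\psi)\nabla\psi-(\Delta A)\nabla A$. Substituting $\nabla\psi=G'(A)\nabla A$ and $\Delta\psi=G'(A)\Delta A+G''(A)|\nabla A|^2$ collapses the right side to $-\big[(1-|G'(A)|^2)\Delta A-G'(A)G''(A)|\nabla A|^2\big]\nabla A$, which is everywhere parallel to $\nabla A$; hence $p+\tfrac12|u|^2-\tfrac12|B|^2$ is constant on the leaves, giving \eqref{pressure} with $p+\tfrac12|u|^2-\tfrac12|B|^2=H(A)$, its regularity following exactly as for $G$ (the gradient on the left vanishes at $x_0$ since $\nabla A(x_0)=0$, so the same analysis applies at $a_\ast$). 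Comparing this with $\nabla H(A)=H'(A)\nabla A$ and cancelling $\nabla A$ (valid off $x_0$, then extended by continuity) yields $(1-|G'(A)|^2)\Delta A-G'(A)G''(A)|\nabla A|^2=-H'(A)=:F(A)$ in $\Omega$, with $A=0$ on $\partial\Omega$ from the first step; this is \eqref{psieqn}, and $F=-H'$. Equivalently, one reaches \eqref{psieqn} directly from \eqref{vort1}: since $\{G(A),g\}=G'(A)\{A,g\}$ and $\{A,f(A)\}=0$ for any $C^1$ functions $f,g$, \eqref{vort1} reduces to $\{A,\,(1-|G'(A)|^2)\Delta A-G'(A)G''(A)|\nabla A|^2\}=0$, so the bracketed quantity is a function of $A$, which the pressure computation then identifies as $-H'(A)$. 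The stated regularities $G\in C^k$ and $F,H\in C^{k-1}$ are read off by tracking derivatives through these identities together with the regularity at $a_\ast$ just established; the one substantive obstacle in the whole argument is that last regularity statement across $a_\ast$, everything else being formal manipulation with the Poisson bracket.
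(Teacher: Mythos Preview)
Your proof is correct and follows essentially the same strategy as the paper: both obtain $\psi=G(A)$ from $\{\psi,A\}=0$ via a reconstruction lemma that relies on the travel-time hypothesis for regularity at the critical value, and then extract the elliptic equation and pressure relation by straightforward bracket/Bernoulli calculus. The one small difference is in the order of the last two steps: the paper derives $F$ and $H$ independently (applying the reconstruction lemma once to the vorticity equation \eqref{vort1} and once to the momentum equation), whereas you obtain $H$ first from the momentum equation and then read off the elliptic equation with $F=-H'$, a relation the paper does not make explicit.
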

\begin{rem}[Assumption on the travel-time]
 Away from the unique null point, the assumption on the travel time \eqref{traveltime} is automatic from the assumed regularity on the solution.  The content of the assumption lies in the order of degeneracy of the critical point of $A$ -- we exclude solutions which degenerate too rapidly.  In the radial case, this amounts to considering streamfunctions which vanish no faster that $r^2$. See Remark \ref{traveltimerem} for more discussion.  More generally, for  $C^\infty$ streamfunctions,  it is proved that any such satisfying our assumptions on the travel time must be Morse functions (Theorem 2.4 of \cite{V92}).  Moreover, for $C^k$ streamfunctions which are Morse, the travel time will be of class $C^{\lfloor \frac{k}{2} \rfloor -1}$ which follows from the finite-regularity extension of Vey's theorem appearing in Appendix D of \cite{KKPS20}.
   These rates of degeneracy, and therefore the regularity of $\mu$, are important in deducing the existence of $G$ and $F$ above in the claimed regularity classes.  This is the content of  Lemma \ref{structuretheorem1} of the Appendix.
\end{rem}
\begin{proof}[Proof of Proposition \ref{structuretheorem}]
The existence of a function $G$ such that the relation \eqref{Arel} holds follows from Lemma \ref{structuretheorem1}.
 In order to obtain \eqref{psieqn}, using  \eqref{Arel} we find that
\begin{align*}
\{\psi ,\Delta \psi\} &= \{G(A) ,\Delta G(A)\} \\
&=G'(A)\nabla^\perp A\cdot \nabla \Delta G(A)\\
&=\nabla^\perp A \cdot \nabla \left(G'(A)  \Delta G(A)\right).
\end{align*}
Thus from the vorticity equation \eqref{vort1} we find
\be
\nabla^\perp A \cdot \nabla \left( \Delta A- G'(A)  \Delta G(A)\right)=0.
\ee
Whence, appealing again to  Lemma \ref{structuretheorem1}, we deduce the existence of  an $F$ such that
\be
\Delta A- G'(A)  \Delta G(A) = F(A).
\ee
Expanding by chain rule gives the claimed equation  \eqref{psieqn}.

To find the pressure formula, we note that one rewrite the steady momentum equation as
\begin{align}
 u^\perp \omega  - B^\perp \eta&= -\nabla \left(p+ \tfrac{1}{2}|u|^2 - \tfrac{1}{2}|B|^2 \right)
\end{align}
where $\omega =\nabla^\perp \cdot u$ and  $\eta =\nabla^\perp \cdot B$.   Since $u^\perp= G'(A) B^\perp$ as a consequence of  \eqref{Arel},
dotting with $B$ we obtain
\be
B \cdot \nabla \left(p+ \tfrac{1}{2}|u|^2 - \tfrac{1}{2}|B|^2 \right)=0.
\ee
Thus by,  Lemma \ref{structuretheorem1}, equation \eqref{pressure} follows.
\end{proof}

Proposition \ref{structuretheorem} is a structure theorem for steady states with non-degenerate magnetic fields. An immediate corollary is
\begin{cor}\label{corrol} Let $\Omega$ be a  simply connected  bounded domain.
The fields $u=\nabla^\perp \psi$, $B=\nabla^\perp A$ and $\Omega$ form a $C^k$ steady free boundary solution of the system \eqref{mhd1}--\eqref{mhd7} with the property that $B$ is non-vanishing except at a single point with $k$-times differentiable travel time if there is a $C^k$ function $G$ and  some $C^{k-1}$ functions $F$ and $H$, such that $A$ solves
\begin{align}\label{Gpsieqn}
\left(1-|G'(A)|^2\right)\Delta A - G'(A) G''(A) |\nabla A|^2 &= F(A)  \qquad\qquad\qquad \quad \ \text{\rm in} \ \ \Omega,\\ \label{psieqn2}
A&=0 \qquad\quad\qquad\qquad \quad \ \  \   \text{\rm on} \ \ \partial \Omega,\\ \label{psieqn3}
 \tfrac{1}{2}(|G'(0)|^2-1) |\partial_nA|^2&=H(0) -  \tfrac{1}{2}|B_{\mathsf{ext}} |^2\qquad\  \ \   \text{\rm on} \ \ \partial \Omega.
\end{align}
\end{cor}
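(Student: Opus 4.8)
The statement is simply the converse of Proposition \ref{structuretheorem}, so the plan is to run the chain-rule computation of that proof backwards and then check the boundary conditions. Set $\psi:=G(A)$, $u:=\nabla^\perp\psi$ and $B:=\nabla^\perp A$. Then \eqref{mhd3} and \eqref{mhd4} hold automatically, since $u$ and $B$ are perpendicular gradients, and the induction equation in the form \eqref{vort2} is immediate, because $\{\psi,A\}=G'(A)\{A,A\}=0$. Thus everything reduces to checking the momentum balance \eqref{mhd1} with an appropriate pressure, together with the free boundary condition \eqref{mhd5}; the remaining boundary conditions \eqref{mhd6}--\eqref{mhd7} will come for free from $\partial\Omega=\{A=0\}$.

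For the momentum equation, first note that by the chain rule $\Delta\psi=\Delta G(A)=G'(A)\Delta A+G''(A)|\nabla A|^2$, so that the PDE \eqref{Gpsieqn} is exactly the relation $\Delta A-G'(A)\Delta\psi=F(A)$. Writing the steady momentum operator in Lamb form,
\[
u\cdot\nabla u-B\cdot\nabla B=\tfrac12\nabla|u|^2-\tfrac12\nabla|B|^2+(\Delta\psi)\,u^\perp-(\Delta A)\,B^\perp,
\]
and using $u^\perp=G'(A)B^\perp$ (a consequence of $\psi=G(A)$) together with $B^\perp=-\nabla A$, the last two terms collapse to $(G'(A)\Delta\psi-\Delta A)B^\perp=-F(A)B^\perp=F(A)\nabla A=\nabla\mathcal F(A)$, where $\mathcal F$ is an antiderivative of $F$. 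Hence $u\cdot\nabla u-B\cdot\nabla B=-\nabla p$ with the explicit, single-valued pressure
\[
p:=\tfrac12|B|^2-\tfrac12|u|^2-\mathcal F(A),
\]
which is \eqref{mhd1}; applying the curl $\nabla^\perp\cdot$ to this identity recovers the vorticity equation \eqref{vort1}, and $p+\tfrac12|u|^2-\tfrac12|B|^2=-\mathcal F(A)=:H(A)$ gives \eqref{pressure}.

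For the boundary, \eqref{psieqn2} says $\partial\Omega=\{A=0\}$, which is then a common level set of $A$ and of $\psi=G(A)$; hence $B=\nabla^\perp A$ and $u=\nabla^\perp\psi$ are tangent to $\partial\Omega$, giving \eqref{mhd7}, and, since throughout we are in the non-penetrating regime $B_{\mathsf{ext}}\cdot\hat n=0$, also \eqref{mhd6}. On $\partial\Omega$ one has $A=0$, $|B|^2=|\partial_nA|^2$ and $|u|^2=|G'(0)|^2|\partial_nA|^2$, so fixing the antiderivative by $\mathcal F(0)=-H(0)$ gives $p=\tfrac12(1-|G'(0)|^2)|\partial_nA|^2+H(0)$ on $\partial\Omega$, which is exactly $\tfrac12|B_{\mathsf{ext}}|^2$ by the hypothesis \eqref{psieqn3}; this is \eqref{mhd5}. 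The $C^k$ regularity of $(u,B)$ and the single-null-point/travel-time structure are inherited directly from $A$, $G,F$ and the prescribed travel time, as in Proposition \ref{structuretheorem}.

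I do not expect any real obstacle here; it is essentially Proposition \ref{structuretheorem} run in reverse. The points needing a little care are: (i) that the chain-rule identities linking \eqref{Gpsieqn} to \eqref{mhd1} are genuinely reversible, which uses $G\in C^2$, guaranteed by $k\ge2$; (ii) that the constructed pressure is globally single-valued, which is automatic from the explicit formula (alternatively one could invoke simple connectedness of $\Omega$ once \eqref{vort1}--\eqref{vort2} are established) so that \eqref{mhd1} holds on all of $\Omega$; and (iii) that imposing the free boundary condition \eqref{mhd5} is precisely what pins down the otherwise free additive constant of $\mathcal F$, i.e. the value $H(0)$, through \eqref{psieqn3}.
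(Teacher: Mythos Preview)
Your proof is correct and is exactly what the paper intends: the corollary is stated as an ``immediate corollary'' of Proposition \ref{structuretheorem} with no separate proof given, and your argument simply runs the chain-rule identities from that proposition in reverse, then verifies that \eqref{psieqn3} encodes the free boundary condition \eqref{mhd5} via the explicit pressure formula. This is precisely the intended reading.
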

Corollary \ref{corrol} exposes the overdetermined nature  of free boundary MHD solutions with the stipulated structure (the field $A$ must satisfy a nonlinear elliptic equation with both Dirichlet and Neumann data).
We recall the seminal result of Serrin \cite{S71} about overdetermined elliptic problems.
\begin{thm}[Theorem 2 of \cite{S71}]
  \label{serrin}
  Let $\Omega$ be a $C^2$ simply connected domain and
$f,g:\mathbb{R}^2\to \mathbb{R}$ be given  $C^1$ functions.
    Suppose $\psi$ is a $C^2$ solution of
    \begin{align}
  g(\psi, |\nabla \psi|) \Delta \psi + f(\psi, |\nabla \psi|) &= 0 \qquad \qquad \ \
  \  \text{\rm  in } \Omega,\\
   \psi &= 0\qquad \qquad \ \  \text{ \rm on } \pa \Omega,\\
   \partial_n \psi &= {\rm (const.)} \qquad \text{ \rm on } \pa \Omega.
  \end{align}
If $\psi > 0$ or
  $\psi < 0$ in $\Omega$
  then $\Omega$ is a ball and $\psi$ is radially symmetric.
\end{thm}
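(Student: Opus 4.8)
The plan is to follow Serrin's original argument: the method of moving planes together with a Hopf-type boundary estimate at corners. First I would reduce to the case $\psi>0$ in $\Omega$. If instead $\psi<0$, replace $\psi$ by $-\psi$; this solves an equation of the same form with the still-$C^1$ data $\tilde g(s,t)=g(-s,t)$, $\tilde f(s,t)=-f(-s,t)$, with the same zero Dirichlet data, and with Neumann data equal to the negative of the original constant. The equation involves $\psi$ only through $\psi$, $|\nabla\psi|$ and $\Delta\psi$, hence is invariant under every Euclidean reflection; ellipticity ($g>0$ along the solution), together with $\psi>0$, is what makes the strong maximum principle and Hopf's boundary point lemma available. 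In particular Hopf applied to $\psi$ on $\partial\Omega$ gives $\partial_n\psi<0$ there, so the prescribed boundary constant is a fixed nonzero (negative) number.

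Next, fix an arbitrary unit vector $e$ and run the moving plane procedure in the direction $e$. Write $T_\lambda=\{x\cdot e=\lambda\}$, let $x^\lambda$ be the reflection of $x$ across $T_\lambda$, let $\Sigma_\lambda=\{x\in\Omega:x\cdot e>\lambda\}$ be the cap, and let $\Sigma_\lambda'$ be its reflection; then $\Sigma_\lambda'\subset\Omega$ for $\lambda$ just below $\sup_{\overline\Omega}x\cdot e$. Set $w_\lambda(x)=\psi(x^\lambda)-\psi(x)$ on $\Sigma_\lambda'$. Subtracting the equation satisfied by $\psi(\cdot^\lambda)$ (the same equation, by reflection invariance) from the one satisfied by $\psi$, and applying the mean value theorem to the $C^1$ data, shows $w_\lambda$ solves a linear elliptic equation with bounded coefficients; its zeroth order coefficient has no definite sign, but $\Sigma_\lambda'$ is a thin slab for $\lambda$ near the top, so the maximum principle for narrow domains applies. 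Since $w_\lambda=0$ on $T_\lambda$ and $w_\lambda=-\psi(x)<0$ on the curved part of $\partial\Sigma_\lambda'$ (which lies in the interior of $\Omega$ for such $\lambda$), we obtain $w_\lambda\le0$ in $\Sigma_\lambda'$, hence $w_\lambda<0$ in the interior unless $w_\lambda\equiv0$. Now decrease $\lambda$ and let $\mu$ be the infimum of those $\lambda$ for which $\Sigma_{\lambda'}'\subset\Omega$ and $w_{\lambda'}\le0$ in $\Sigma_{\lambda'}'$ for every larger $\lambda'$. By continuity, at $\lambda=\mu$ either (i) the curved part of $\partial\Sigma_\mu'$ is internally tangent to $\partial\Omega$ at a point $q\notin T_\mu$, or (ii) $T_\mu$ is orthogonal to $\partial\Omega$ at a point $q\in T_\mu\cap\partial\Omega$.

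In case (i), $w_\mu\le0$ in $\Sigma_\mu'$ and $w_\mu(q)=\psi(q^\mu)-\psi(q)=0$, while Hopf's lemma at $q$ would force $\partial_\nu w_\mu(q)>0$ unless $w_\mu\equiv0$; but $q$ and $q^\mu$ both lie on $\partial\Omega$, the reflection carries the outward normal at $q$ to the outward normal at $q^\mu$, and the constant Neumann condition makes $\partial_\nu\psi(q)$ and $\partial_\nu[\psi(\cdot^\mu)](q)$ equal, so $\partial_\nu w_\mu(q)=0$, a contradiction, forcing $w_\mu\equiv0$. In case (ii), $q$ is a corner of $\Sigma_\mu'$ where $T_\mu$ meets $\partial\Omega$ at a right angle; there $w_\mu=0$ and, again by the constant Neumann condition, $\nabla w_\mu(q)=0$, so $w_\mu$ vanishes to second order at $q$. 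This is the configuration ruled out by Serrin's corner lemma, a Hopf-type estimate for a uniformly elliptic equation in the wedge near $q$, proved by constructing a suitable barrier, unless $w_\mu\equiv0$. In either case $\psi(x^\mu)=\psi(x)$ on $\Sigma_\mu'$, so $\Omega$ is symmetric under reflection in $T_\mu$ and so is $\psi$. Since $e$ was arbitrary, $\Omega$ admits a plane of symmetry orthogonal to every direction; these planes all pass through a common point (the centroid of $\Omega$), whence $\Omega$ is a ball and $\psi$, inheriting reflection symmetry in every plane through the center, is radially symmetric.

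The step I expect to be the main obstacle is case (ii): proving Serrin's corner lemma, that a solution of a uniformly elliptic equation in a wedge which vanishes together with its gradient at the vertex must be identically zero. This is the genuinely new ingredient beyond the plain moving-plane method, and the constant-normal-derivative (overdetermination) hypothesis enters precisely in cases (i) and (ii) to make the normal derivatives of $\psi$ and its reflection match at $q$. The one recurring technicality elsewhere is the absence of a sign for the zeroth order term of the linearized equation for $w_\lambda$, which is handled through the narrowness of the slabs $\Sigma_\lambda'$ near the start of the procedure together with a continuation argument as $\lambda$ decreases.
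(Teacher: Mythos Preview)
The paper does not supply its own proof of this statement: it is quoted as Theorem~2 of \cite{S71} and used as a black box in the proof of Theorem~\ref{rigthm2d}. Your sketch is a faithful outline of Serrin's original moving-planes argument, and you have correctly identified the corner case (ii) as the place where the genuinely new Hopf-type ingredient lives and where the constant Neumann datum is actually used. There is nothing in the present paper to compare against.
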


As a result, we expose the following rigidity of solutions of the free boundary problem
\begin{thm}\label{rigthm2d}
Let $k\geq 4$ and  $\Omega$ be a  simply connected  bounded domain. Let $u=\nabla^\perp \psi$, $B=\nabla^\perp A$ and $\Omega$ be a $C^k$ steady free boundary solution of the system \eqref{mhd1}--\eqref{mhd7} with trivial external field and current  $B_{\mathsf{ext}} =j_{\mathsf{ext}} =0$. Suppose that $B$ is non-vanishing except at a single point with $k$-times differentiable travel time.  Suppose additionally that $|B|\neq |u|$ away from the null point $p$ of $B$ and that $\lim_{x\to p} |u(x)|/|B(x)|\neq 1$. Then $\Omega= B_R(0)$ with $R$ such that ${\rm vol}(B_R(0))= {\rm vol}(\Omega_0)$ and the velocity and magnetic field are circular  $($i.e. $A:= A(r)$ and $\psi:= \psi(r)$$)$.
\end{thm}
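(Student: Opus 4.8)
The plan is to use the two commensurability hypotheses to show that the quasilinear elliptic equation for $A$ supplied by Proposition~\ref{structuretheorem} is uniformly elliptic over the whole range of $A$, to check that it additionally carries constant Dirichlet and Neumann data, and then to invoke Serrin's rigidity theorem (Theorem~\ref{serrin}).

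Since $(u,B,\Omega)$ is a $C^k$ steady free boundary solution with $B$ nonvanishing except at the single interior point $p$ with $k$-times differentiable travel time, Proposition~\ref{structuretheorem} and Corollary~\ref{corrol} apply: there are a $C^k$ function $G$ with $\psi=G(A)$ and $C^{k-1}$ functions $F,H$ with \eqref{Gpsieqn}--\eqref{psieqn2} holding and, since $B_{\mathsf{ext}}=0$, with the Neumann relation \eqref{psieqn3} reading $\tfrac12(|G'(0)|^2-1)|\partial_n A|^2=H(0)$ on $\partial\Omega$. Pointwise $|u|=|G'(A)|\,|B|$, so $|u|/|B|=|G'(A)|$ away from $p$ and, by continuity, $\lim_{x\to p}|u(x)|/|B(x)|=|G'(A(p))|$. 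Next I would fix the sign of $A$: replacing $(A,\psi)$ by $(-A,-\psi)$ (and $G$ accordingly) if necessary, we may assume $p$ realizes $\max_{\overline\Omega}A$, for both extrema of $A$ are attained and if both were attained only on $\partial\Omega$ then $A\equiv0$, contradicting $B\not\equiv0$; writing $M:=A(p)$, one has $M>0$ (otherwise again $A\equiv0$), and since the minimum of $A$ over $\overline\Omega$ cannot occur at the interior critical point $p$ it must occur on $\partial\Omega$, whence $A>0$ in $\Omega$ and $A(\overline\Omega)=[0,M]$.

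The key step is the following. For $a\in(0,M)$ the level set $\{A=a\}$ misses $p$ and is regular, so there $|\nabla A|\neq0$ and the hypothesis $|u|\neq|B|$ away from $p$ gives $|G'(a)|\neq1$; the same applies on $\partial\Omega=\{A=0\}$ (regular, since $p$ is interior), giving $|G'(0)|\neq1$; and $\lim_{x\to p}|u|/|B|\neq1$ gives $|G'(M)|\neq1$. Hence $a\mapsto1-|G'(a)|^2$ is continuous and nonvanishing on $[0,M]$, so it has a fixed sign there; multiplying \eqref{Gpsieqn} by $-1$ if necessary, $A$ solves
\begin{equation*}
\bar g(A)\,\Delta A+f(A,|\nabla A|)=0\quad\text{in }\Omega,\qquad \bar g:=\bigl|\,1-|G'|^2\,\bigr|>0,\quad f:=\mp\bigl(G'(A)G''(A)|\nabla A|^2+F(A)\bigr),
\end{equation*}
a uniformly elliptic quasilinear equation whose coefficients are $C^1$ (indeed $C^{k-2}$) since $k\ge4$ and which, after extending $G',F$ off $[0,M]$ so as to keep $|G'|\neq1$, are defined on all of $\mathbb{R}^2$, as Theorem~\ref{serrin} requires. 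Moreover $\nabla A\neq0$ on $\partial\Omega$ (again because $p$ is interior) and $A=0$ there, so $\partial_n A=-|\nabla A|$ on $\partial\Omega$; combined with $|G'(0)|\neq1$, the Neumann relation forces $|\partial_n A|^2=2H(0)/(|G'(0)|^2-1)$, a (nonzero) constant on $\partial\Omega$. Thus $A$ meets every hypothesis of Theorem~\ref{serrin} with $A$ in the role of $\psi$: the uniformly elliptic PDE, zero Dirichlet data, constant Neumann data, and $A>0$ in $\Omega$. Serrin's theorem then forces $\Omega$ to be a ball; translating it to be centered at the origin, $\Omega=B_R(0)$ with ${\rm vol}(B_R(0))={\rm vol}(\Omega_0)$, and $A=A(r)$ radial. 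Finally $\psi=G(A(r))=\psi(r)$ is radial, so $B=\nabla^\perp A$ and $u=\nabla^\perp\psi$ are circular, as claimed.

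The main obstacle is the uniform-ellipticity step: the two commensurability hypotheses exist precisely to prevent $1-|G'(A)|^2$ from changing sign anywhere on the full range $[0,M]$ of $A$ --- in particular at the endpoint $M$, which is attained only at the null point $p$ and which is exactly why the separate limiting assumption at $p$ is needed --- since otherwise \eqref{Gpsieqn} degenerates and the moving-plane argument underlying Theorem~\ref{serrin} no longer applies. Everything else --- the structure-theorem input, the elementary sign and topology of $A$, the regularity bookkeeping needed to match Serrin's hypotheses, and passing radial symmetry from $A$ to $\psi=G(A)$ --- is routine.
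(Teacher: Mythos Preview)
Your proof is correct and follows essentially the same route as the paper's: reduce to Corollary~\ref{corrol}, use the commensurability hypotheses to get $|1-|G'(A)|^2|$ bounded away from zero on the full range of $A$, observe that $A$ has a single sign, and apply Serrin's Theorem~\ref{serrin}. You supply considerably more detail than the paper's terse argument --- in particular the careful explanation of why $A$ has a sign and why the separate limiting hypothesis at $p$ is needed precisely to handle the endpoint $M=A(p)$ of the range --- but the strategy is identical.
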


    \begin{figure}[h!]
      \includegraphics[height=.3 \linewidth]{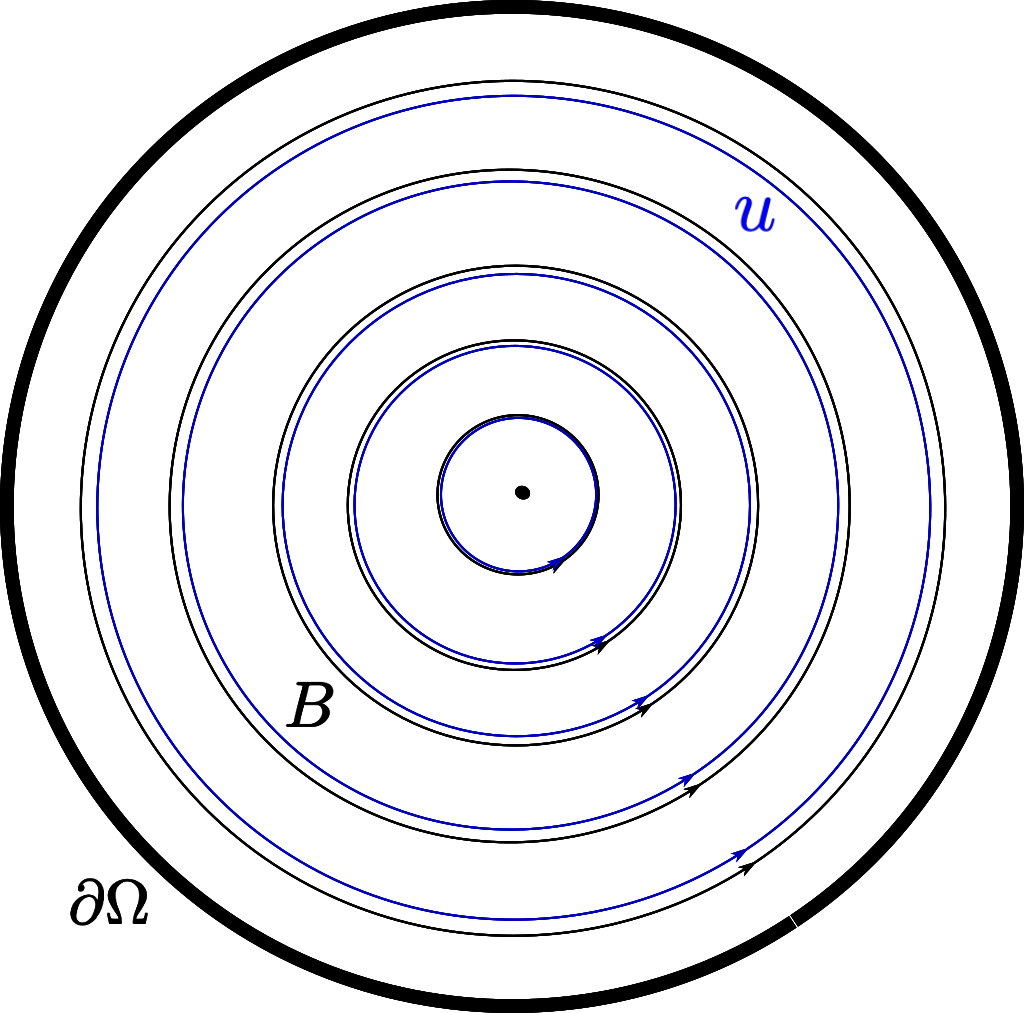}
    \end{figure}

\begin{rem}[Magnetohydrostatic equilibria]
In the special case of $u=0$ ($G=0$),  we deduce that the self-supporting magnetic field must be radial. This result was obtained recently by Hamel and Nadirashvili \cite{HN21} (Theorem 1.10 therein) and is meaningful in the context of Grad's conjecture concerning rigidity of magnetohydrostatic equilibria, see discussion in \cite{CDG21a,CDG21}.
\end{rem}

\begin{proof}[Proof of Theorem \ref{rigthm2d}] Our assumptions put us in the setting of Corollary \ref{corrol}.
Note that, by our assumption that the velocity and magnetic field are never commensurate, we have the existence of a $c_0>0$ such that $\left|1 - |G'(A)|^2\right|\geq c_0$.
Since, by assumption, $F,G''\in C^2$, $A = 0$ on the
boundary and $|\nabla A|$ vanishes only at one point in the interior, it follows that $A$ has a single sign in $\Omega$
 we are in the setting of Theorem \ref{serrin}. Applying this completes the proof.
\end{proof}

\begin{rem}[Breaking symmetry using flow velocity]
We remark that, for any given flux function $A$,
if $G'(A)^2 = 1$ identically then equation \eqref{Gpsieqn}
is a tautology (with $F=0$) and \eqref{psieqn3} is satisfied with $H(0)= 0$ and $B_{\mathsf{ext}}
=0$.  This corresponds taking $u=B$ where $B$ is chosen to be an arbitrary divergence free vector field tangent to a closed curve.  In this way, one can generate a free boundary steady state  $(u,B)$ on any desired domain.
\end{rem}

\subsection{Breaking symmetry using external fields} We now show how an external magnetic field can be used to shape the plasma in any desired configuration. Outside of the plasma body, there is a given steady
magnetic field in this region generated by an imposed current distribution $J_{\mathsf{ext}} $.
Despite  \eqref{psieqn}--\eqref{psieqn3} being an  overdetermined system, it can be hoped that by varying a designer external field $B_{\mathsf{ext}} $, one can obtain solutions occupying any desired geometry.  Indeed, it is this expectation that fuels hope in the scientific program surrounding stellarator confinement fusion.

We now show that this expectation is indeed the case here by constructing such free boundary steady states on \emph{any given domain} by designing a special imposed current.   The strategy is as follows.  Fix a simply connected domain $\Omega$, along with smooth functions $G$, $F$ and $H$ such that the elliptic problem  \eqref{psieqn}-\eqref{psieqn2} is solvable. The additional condition \eqref{psieqn3} will be fixed after the fact by designing the external field.
 The external field can be represented by $  B_{\mathsf{ext}}  = \nabla^\perp A_{\mathsf{ext}} $ where
\begin{align}
\Delta A_{\mathsf{ext}} &= J_{\mathsf{ext}}  \quad\quad  \text{ in } \mathbb{R}^2 \setminus \Omega,\\
A_{\mathsf{ext}} &=0 \quad\quad\quad \text{ on } \partial \Omega,\\
A_{\mathsf{ext}} &\to 0 \quad\quad\ \  \text{ as } |x|\to \infty.
\end{align}
Condition \eqref{psieqn3}  becomes an additional Neumann condition on the potential:
\be
 |\partial_n A_{\mathsf{ext}} |^2 = H(0)  +  \tfrac{1}{2}(1-|G'(0)|^2) |\partial_nA|^2,
 \label{additional}
\ee
and we would therefore like to choose $J_{\mathsf{ext}}$ so that given $A$, the restulting
potential $A_{\mathsf{ext}}$ satisfies \eqref{additional}.
Construction of a solution $ A_{\mathsf{ext}} $ of the above by an appropriate choice of the current  $ J_{\mathsf{ext}} $ leads to a simple inverse problem.  We solve this problem in the class of singular current sheet distributions, which provide a simple model for thin but densely packed coils surrounding the plasma body.
\begin{lemma}\label{lemcoils}
Let $\Omega$ be any simply connected domain with real analytic boundary and $f:\partial \Omega \to \mathbb{R}$ be a given analytic function with $f>0$.  Then, there exists a smooth Jordan curve $\Gamma$ surrounding $\Omega$ and a scalar function $j: \Gamma \to \mathbb{R}$  such that there is a solution $A_{\mathsf{ext}}:\mathbb{R}^2 \setminus \Omega\to \mathbb{R}$ to
\begin{align}\label{oda1}
\Delta A_{\mathsf{ext}} &=  j \delta_\Gamma  \quad\quad  \text{ \rm in } \mathbb{R}^2 \setminus \Omega,\\\label{oda2}
A_{\mathsf{ext}} &=0 \quad\quad\quad \  \text{\rm on } \partial \Omega,\\ \label{oda3}
\partial_n A_{\mathsf{ext}} &=f \quad\quad\quad \ \text{\rm on } \partial \Omega,\\
\nabla A_{\mathsf{ext}} &\to 0 \quad\quad\ \ \  \ \text{\rm as } |x|\to \infty. \label{oda4}
\end{align}
\end{lemma}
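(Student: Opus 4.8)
The plan is to solve the elliptic Cauchy problem for the Laplace equation issuing from $\partial\Omega$ with Dirichlet datum $0$ and Neumann datum $f$, and then to cut the resulting harmonic function off along one of its level curves; the sheet current will be exactly the jump in normal derivative created by this cutoff, and the level-curve choice is what makes the exterior piece of $A_{\mathsf{ext}}$ trivial.

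First I would use the analyticity hypotheses. Working in boundary-adapted (Fermi) coordinates $(s,t)$ near $\partial\Omega$, with $t$ the signed distance increasing in the direction of the outward normal $n$, the Laplacian has real-analytic coefficients (the metric is $dt^2+(1-\kappa(s)t)^2ds^2$ with $\kappa$ the analytic curvature of $\partial\Omega$) and is noncharacteristic with respect to $\{t=0\}$. Hence the Cauchy--Kovalevskaya theorem yields a harmonic function $v$ on a two-sided collar neighborhood $\mathcal N$ of $\partial\Omega$ with $v|_{\partial\Omega}=0$ and $\partial_n v|_{\partial\Omega}=f$; the local solutions it produces agree on overlaps by the Holmgren uniqueness theorem, and by compactness of $\partial\Omega$ they patch to a single $v$ on a collar of uniform width. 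Because $f>0$ on the compact set $\partial\Omega$ we have $|\nabla v|\ge|\partial_n v|>0$ on $\partial\Omega$, so after shrinking $\mathcal N$ the function $v$ has no critical points there, and since $\partial_t v(s,0)=f(s)>0$ the implicit function theorem shows that for every small $\varepsilon>0$ the level set $\{v=\varepsilon\}$ is a real-analytic Jordan curve lying in the outward collar $\mathcal N\cap(\mathbb{R}^2\setminus\overline\Omega)$ and encircling $\Omega$. Fix such an $\varepsilon$, set $\Gamma:=\{v=\varepsilon\}$, let $D$ be the (annular) region between $\partial\Omega$ and $\Gamma$, and let $E$ be the unbounded component of $\mathbb{R}^2\setminus\Gamma$.

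Next I would define $A_{\mathsf{ext}}:=v$ on $\overline D$ and $A_{\mathsf{ext}}:=\varepsilon$ on $E$. This is continuous across $\Gamma$ (both sides equal $\varepsilon$), harmonic separately in $D$ and in $E$, satisfies $A_{\mathsf{ext}}=0$ and $\partial_n A_{\mathsf{ext}}=f$ on $\partial\Omega$ directly from the Cauchy data, and has $\nabla A_{\mathsf{ext}}\equiv 0$ on $E$ so that $\nabla A_{\mathsf{ext}}\to 0$ at infinity. Testing against $\varphi\in C_c^\infty(\mathbb{R}^2\setminus\overline\Omega)$ and integrating by parts on $D$ and on $E$ separately, continuity of $A_{\mathsf{ext}}$ kills the double-layer contribution and one obtains $\Delta A_{\mathsf{ext}}=j\,\delta_\Gamma$ in $\mathcal{D}'(\mathbb{R}^2\setminus\overline\Omega)$ with $j:=-\partial_n v|_\Gamma$, where $n$ is now the normal to $\Gamma$ pointing away from $\Omega$. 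Since $v$ is real analytic on $\mathcal N$, the trace $j$ is a real-analytic, in particular smooth, function on $\Gamma$ (in fact $j=-|\nabla v|<0$, though no sign of $j$ is needed). This establishes \eqref{oda1}--\eqref{oda4}.

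I do not expect a genuine obstacle once this strategy is adopted; the single essential ingredient is the solvability of the elliptic Cauchy problem, which is precisely where the real-analyticity of $\partial\Omega$ and of $f$ is used — a finite-regularity Cauchy problem for $\Delta$ is ill-posed, so this hypothesis is not removable by the present method. The two points that need care are (i) verifying that $\Gamma=\{v=\varepsilon\}$ is a genuine Jordan curve surrounding $\Omega$, which uses $f>0$ both to keep $\nabla v$ from vanishing near $\partial\Omega$ and to place the level set on the outward side of $\partial\Omega$, and (ii) tracking orientations so that $j$ is the (exterior minus interior) jump of $\partial_n A_{\mathsf{ext}}$ across $\Gamma$. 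One could instead take $\Gamma$ to be any analytic Jordan curve inside the collar, e.g.\ $\{\dist(x,\partial\Omega)=\varepsilon\}$, and solve the exterior Dirichlet problem with boundary data $v|_\Gamma$ and gradient decay at infinity (uniquely solvable, and analytic up to $\Gamma$ by analytic elliptic regularity); truncating at a level set is preferable only because it forces the exterior solution to be constant, so that the designer field $B_{\mathsf{ext}}=\nabla^\perp A_{\mathsf{ext}}$ vanishes entirely outside the coil $\Gamma$.
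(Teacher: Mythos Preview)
Your proposal is correct and follows essentially the same route as the paper's first proof: apply Cauchy--Kovalevskaya to get a harmonic extension $v$ in a collar, truncate along a level curve $\Gamma=\{v=\varepsilon\}$, and read off the sheet current as the jump in normal derivative (the paper writes $j=-|\nabla A_{\mathsf H}|^2$, yours $j=-|\nabla v|$; up to the coarea factor in the meaning of $\delta_\Gamma$ these agree). Your write-up is in fact somewhat more careful about the patching (Holmgren) and the distributional verification, and the paper also records a second, more explicit proof via a representation formula when $\Omega$ is a disk, which lets one fix the coil geometry in advance.
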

We offer two proofs of this lemma.  The first uses the Cauchy–Kovalevskaya theorem as in \cite{ELPS21}.  The second is more explicit and relies on a representation formula for solutions of Poisson problems in exterior domains.  These approaches have the possible drawback that the curve $\Gamma$ constructed by the argument must be taken close to the domain.
 We defer the details of these arguments to Appendix \ref{appendcoil}.

The assumption that $f$ is analytic is essentially sharp. In the simple case
that $\Omega$ is taken to be the unit disk, if the problem \eqref{oda1}-\eqref{oda4}
has a solution for some $C^1$ curve $\Gamma$ and $L^1(\Gamma)$ function $j$,
then the Fourier coefficients of $f$ need to decay exponentially fast
and this implies analyticity. See Lemma \ref{sharpness}. A more general result that holds for analytic domains $\Omega$ and allows for penetrative fields is proved in a forthcoming work of \cite{GRR21}.

With Lemma \ref{lemcoils} in hand, we  prove

\begin{thm}\label{nonsymmthmext}
Let $\Omega\subset \mathbb{R}^2$ be any simply connected domain with real analytic boundary.  Let $u=\nabla^\perp \psi$, $B=\nabla^\perp A$ and $\Omega$ be an analytic fixed boundary steady MHD solution  (e.g. solution of the system \eqref{mhd1}--\eqref{mhd7} without the free boundary condition \eqref{mhd5}) with $B$ non-vanishing except at a single point and such that
\be\label{condBvP}
|B|^2 >-\tfrac{2H(0)}{1-|G'(0)|^2} \qquad  \text{\rm on} \ \ \partial \Omega,
\ee
where $G$ and $H$ are defined in Proposition \ref{structuretheorem}.
Then there exists an external magnetic field generated by a singular charge distribution (current sheet) making $(u,B, \Omega)$ into a free boundary equilibrium.
\end{thm}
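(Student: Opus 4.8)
The plan is to reduce the one remaining equilibrium condition, the pressure balance \eqref{mhd5}, to the exterior inverse source problem solved in Lemma \ref{lemcoils}, and then to observe that everything else in \eqref{mhd11}--\eqref{mhd7} is inherited from the given fixed boundary solution. First I would apply Proposition \ref{structuretheorem} to $(u,B,\Omega)$: for an analytic solution with a single magnetic null point this yields (after the routine analytic strengthening of the structure lemma in the appendix) analytic functions $G$, $F$, $H$ with $\psi = G(A)$, $u = G'(A)B$, the potential $A$ solving the elliptic problem \eqref{psieqn}--\eqref{psieqn2}, and $p + \tfrac12|u|^2 - \tfrac12|B|^2 = H(A)$ in $\Omega$. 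Evaluating this pressure identity on $\partial\Omega = \{A=0\}$ and using $|u| = |G'(0)|\,|B|$ there gives $p = H(0) + \tfrac12\bigl(1-|G'(0)|^2\bigr)|B|^2$ on $\partial\Omega$; since $A$ is constant along $\partial\Omega$ we also have $|B| = |\nabla A| = |\partial_n A|$ there.

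Hence $(u,B,\Omega)$ extends to a free boundary equilibrium precisely when there is a non-penetrating external field $B_{\mathsf{ext}} = \nabla^\perp A_{\mathsf{ext}}$ on $\mathbb{R}^2\setminus\Omega$ with $A_{\mathsf{ext}} = 0$ on $\partial\Omega$, $\nabla A_{\mathsf{ext}}\to 0$ at infinity, and $\tfrac12|B_{\mathsf{ext}}|^2 = p$ on $\partial\Omega$. Since $A_{\mathsf{ext}}$ is constant along $\partial\Omega$ we have $|B_{\mathsf{ext}}| = |\nabla A_{\mathsf{ext}}| = |\partial_n A_{\mathsf{ext}}|$ there, so --- inserting the boundary value of $p$ computed above --- the last requirement is exactly the Neumann condition \eqref{additional}, i.e. $|\partial_n A_{\mathsf{ext}}| = f$ on $\partial\Omega$ where $f$ is the positive square root of the right-hand side of \eqref{additional}. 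The hypothesis \eqref{condBvP} enters in exactly one place: dividing it by $1-|G'(0)|^2 > 0$ and using $|\partial_n A| = |B|$ on $\partial\Omega$, it is precisely the statement that the radicand is pointwise positive, so that $f$ is a well-defined, strictly positive, real analytic function on the analytic curve $\partial\Omega$ (its analyticity using that of $B$, of $\partial\Omega$, and of $H$).

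Next I would feed this $f$ into Lemma \ref{lemcoils}, which furnishes a smooth Jordan curve $\Gamma$ surrounding $\Omega$ and a density $j:\Gamma\to\mathbb{R}$ such that the solution $A_{\mathsf{ext}}$ of \eqref{oda1}--\eqref{oda4} exists (the sign of $\partial_n A_{\mathsf{ext}}$ on $\partial\Omega$ is immaterial, since only $|B_{\mathsf{ext}}|^2$ appears in \eqref{mhd5}). Set $J_{\mathsf{ext}} := j\,\delta_\Gamma$, supported on $\Gamma \subset \mathbb{R}^2\setminus\overline{\Omega}$, and $B_{\mathsf{ext}} := \nabla^\perp A_{\mathsf{ext}}$. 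It then remains to check that $(u,B,\Omega,B_{\mathsf{ext}})$ solves \eqref{mhd11}--\eqref{mhd7}: the exterior equations \eqref{mhd11}--\eqref{mhd2} hold since $\curl B_{\mathsf{ext}} = \Delta A_{\mathsf{ext}} = J_{\mathsf{ext}}$ and $\div B_{\mathsf{ext}} = 0$ in $\mathbb{R}^2\setminus\Omega$; the interior equations \eqref{mhd1}--\eqref{mhd4} hold because $(u,B)$ is already a steady fixed boundary solution; \eqref{mhd6} and \eqref{mhd7} hold because $\partial\Omega$ is a common level set of $\psi$, $A$, and $A_{\mathsf{ext}}$, so $u\cdot\hat{n} = B\cdot\hat{n} = B_{\mathsf{ext}}\cdot\hat{n} = 0$ (in particular $B_{\mathsf{ext}}$ does not penetrate $\Omega$); and \eqref{mhd5} holds by the reduction above together with $|B_{\mathsf{ext}}| = |\partial_n A_{\mathsf{ext}}| = f$ on $\partial\Omega$.

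The substantive work is entirely contained in Lemma \ref{lemcoils} --- building the coil curve $\Gamma$ and the sheet density $j$ that realize the Cauchy data $(0,f)$ on $\partial\Omega$ together with the decay at infinity --- which is proved in Appendix \ref{appendcoil} (via Cauchy--Kovalevskaya, or an explicit exterior Poisson representation) and is where the analyticity of $\partial\Omega$ and of $f$ is really used. Within the present argument the only steps needing care are the bookkeeping identifying \eqref{mhd5} with \eqref{additional}, the observation that \eqref{condBvP} is exactly the positivity of the Neumann datum $f$ (so the square root is real and analyticity is preserved), and the routine analytic upgrade of Proposition \ref{structuretheorem}; I expect no genuine obstacle beyond correctly invoking Lemma \ref{lemcoils}.
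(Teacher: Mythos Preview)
Your proposal is correct and follows essentially the same route as the paper: the paper does not write out a formal proof of Theorem \ref{nonsymmthmext} but makes clear in the discussion preceding Lemma \ref{lemcoils} that the argument is exactly what you describe --- use Proposition \ref{structuretheorem}/Corollary \ref{corrol} to reduce the free boundary condition \eqref{mhd5} to the Neumann constraint \eqref{additional}, observe that \eqref{condBvP} is precisely the positivity of the radicand so that $f>0$ is analytic on the analytic curve $\partial\Omega$, and then invoke Lemma \ref{lemcoils} to manufacture $A_{\mathsf{ext}}$, $\Gamma$, and $j$. Your verification of \eqref{mhd11}--\eqref{mhd7} at the end is more explicit than anything the paper writes, but adds no new idea.

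One small point worth flagging: you assert ``dividing it by $1-|G'(0)|^2>0$'' without justification. The paper is equally silent here, and in fact the hypothesis \eqref{condBvP} as written only yields positivity of the right-hand side of \eqref{additional} when $1-|G'(0)|^2>0$; the remark following the theorem (and the analogous Theorem \ref{nonsymmthmext2}) suggests this sign is being tacitly assumed. This is not a defect in your argument relative to the paper's, but you might note it as an implicit hypothesis.
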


    \begin{figure}[h!]
      \includegraphics[height=.4 \linewidth]{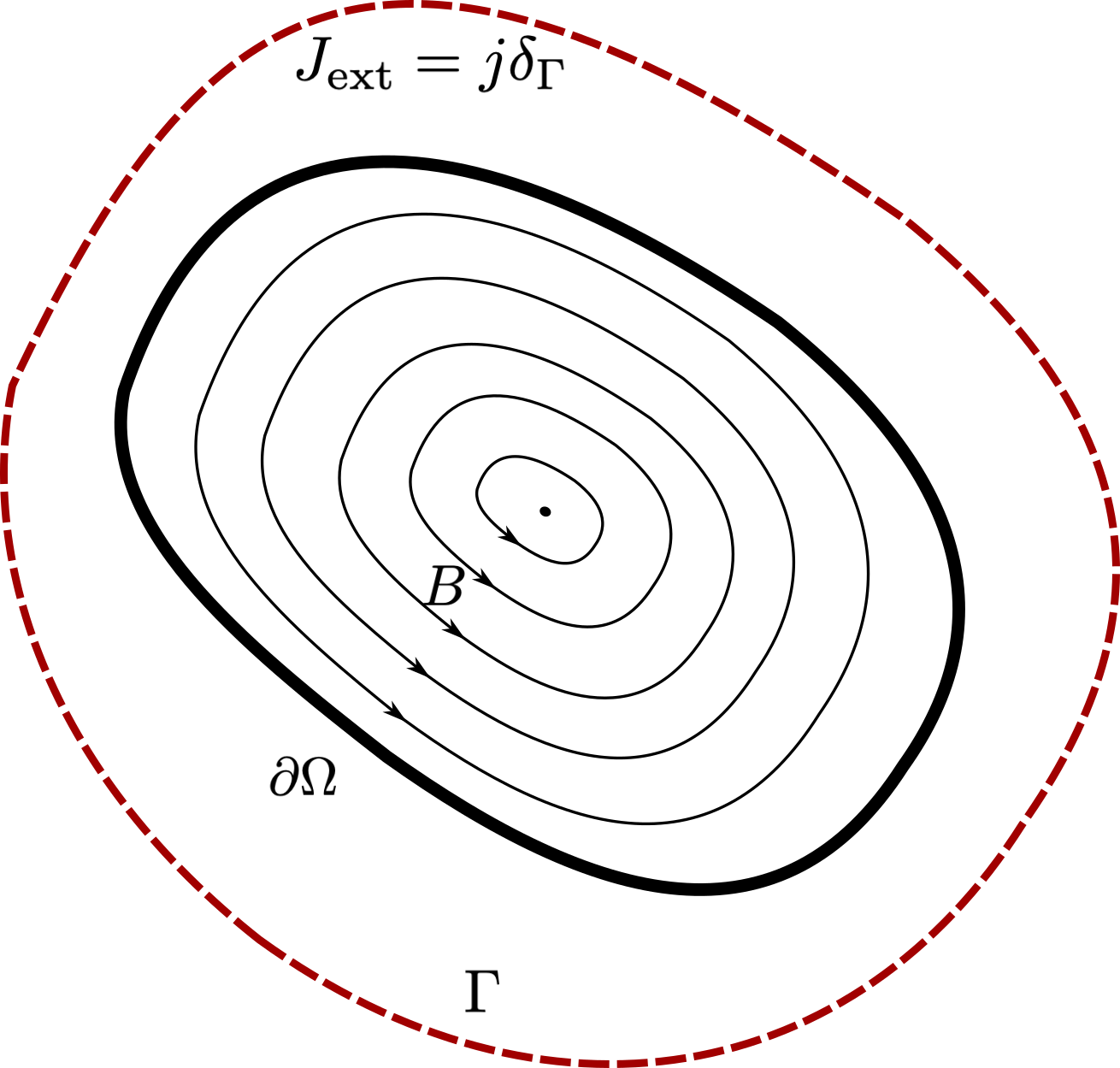}
    \end{figure}

   \begin{rem}[Structural assumption]
We first remark that the condition \eqref{condBvP} is automatic for any solution with positive plasma pressure $H>0$ having the additional  property that $|u|<|B|$ on the boundary of the domain (in particular, for magnetohydrostatic solutions which have $u=0$).
\end{rem}
\begin{rem}[Structure of coils] We make a few remarks about the realizability of the external currents.
First, the singular nature of the external charge distribution is not necessary and it can be made smooth.
Second, in practice it may be desirable to prescribe a constant current density along the coil.  Imposing this requirement results in the more complicated problem of  finding an appropriate curve $\Gamma$ to realize this constraint.  We do not address this interesting issue here.
Finally, it may be desirable to have current localized to points (in 2d) or curves (in 3d). In this direction, heuristically one expects that the current sheet could be approximated by many point charges allowing one to  create free boundary configurations on a domain nearby some target by using a sufficiently large but finite collection of ``point coils":
\be
J_{\mathsf{ext}}  = \sum_{i=1}^N j(x_i) \delta_{x_i} \approx  j \delta_\Gamma.
\ee
Of course, this replacement incurs an error and to make the Cauchy datum \eqref{oda2}--\eqref{oda3} hold exactly for a nearby fixed boundary steady state requires a non-trivial argument.
\end{rem}

   \begin{rem}[Construction of MHS equilibria and ``2d stellarator program"]
The main result of \cite{CDG21a} provides a method of constructing fixed boundary solutions with the desired ``laminar" structure of magnetohydrostatics (MHS) on domains which are slight deformations of disks.  Specifically, consider an MHD solution with $u=0$ and $B$ satisfying  MHS on a given domain $\Omega\subset \mathbb{R}^2$
\begin{align}
 B\cdot \nabla B&=\nabla p \qquad \quad \text{in} \ \ \Omega,\\
B \cdot \hat{n}&=0  \qquad\quad \quad   \text{on} \ \ \partial \Omega.
\end{align}
Theorem 1.3 of \cite{CDG21a} guarantees that solutions of the above which satisfy the Arnold stability conditions are deformable to arbitrary close domains.  Their magnetic field line topology is preserved by this deformation.  If $B= \nabla^\perp A$ and $\eta:= \nabla^\perp \cdot B= \Delta A$, this concerns stationary solutions of the form $\eta = F(A)$ where $F:\mathbb{R}\to \mathbb{R}$ satisfies $F'(A) >   -\lambda_1$ where $\lambda_1:= \lambda_1(\Omega)$ is the first eigenvalue of the Dirichlet Laplacian on $\Omega$. We remark that, on a convex domain $\Omega$, all Arnold stable flows have a single stagnation point in the interior.  See Theorem 1.4 of \cite{N13}.  This result also holds for the  first Dirichlet eigenfunction of the Laplacian.  Note also that if $\Omega= B_1(0)$ is the disk, then any radial potential $A=A(r)$ defines a stationary solution and if $A(r)$ is monotone decreasing away from $r=0$ then the stability condition above is satisfied.  Provided the travel time of these solutions is bounded at their centers, Theorem 1.3 applies to deform any member of this large class of stationary solutions to nearby analytic domains.  Theorem \ref{nonsymmthmext} herein then applies and such solutions can be realized as free boundary equilibria supported by suitable external currents. As such, one may regard this as a complete version of the (static) stellarator problem in two-dimensions - to identify asymmetric equilibria and give a constructive prescription for how to support them with external coils.  In three dimensions, this static problem has been approached numerically \cite{HM86,L17} but there are issues related to the possible non-existence of suitable equilibria outside symmetry (Grad's conjecture).  If identified, the dynamical problem of driving the physical plasma system to such a configuration by designing a control procedure involving running time varying currents through fixed external coils is a major open issue, even in two dimensions.
\end{rem}

\section{Three dimensions with axial symmetry}
There are results analogous to those in the previous section in three space
dimensions for symmetric configurations.
In this setting it is convenient to rewrite \eqref{mhd1} as
\begin{align}
\omega\times u - J \times B  &= - \nabla P, \label{eul}
\end{align}
where $J=\curl B$ and where we have introduced the Bernoulli function
\begin{equation}
 P = p + \frac{1}{2} |u|^2 - \frac{1}{2} |B|^2.
 \label{Pdef}
\end{equation}
Note also that the induction equation  \eqref{mhd2} can be understood simply as Lie transport
\be
\mathcal{L}_B u=0
\ee
where $\mathcal{L}_B u= [B, u]$ denotes the Lie derivative.
With $(r, \phi, z$) the standard cylindrical
coordinates, we will consider solutions $(u, B)$ which are
either independent of $z$ or $\phi$, meaning that
\begin{equation}
 \mathcal{L}_{e_z} B
 =
 \mathcal{L}_{e_z} u = 0
 \qquad
 \text{ or }
 \qquad
 \mathcal{L}_{e_\phi} B
=
\mathcal{L}_{e_\phi} u = 0
\end{equation}
with $e_z, e_\phi$ the usual basis vectors. Such solutions may occupy infinite cylinders or compact tori wrapping around the $z$-axis.

\subsection{$z$-independent solutions}
Here we consider solutions which satisfy
$\mathcal{L}_{e_z} B = \mathcal{L}_{e_z} u = 0$.
As such, these solutions occupy infinite cylindrical domains and contain the 2d setting as a special case. By
the elementary identity
\begin{equation}
 \curl (X \times Y) = X \div Y - Y \div X
 +\mathcal{L}_Y X,
 \label{ident}
\end{equation}
and $\div B = \div u = 0$ we have
\begin{equation}
 \mathcal{L}_{e_z} B = \curl (e_z \times B).
\end{equation}
We therefore make the following ansatz for $B$,
\begin{equation}
 B = C(A) e_z + e_z \times \nabla A,
 \qquad
 A = A(r,\theta),
 \label{Bansatz}
\end{equation}
and similarly for $u$ we take
\begin{equation}
 u = F(\psi) e_z + e_z\times \nabla \psi, \qquad
 \psi = \psi(r,\theta).
 \label{uansatz}
\end{equation}
Here $C, F$ are arbitrary functions. The vector fields in \eqref{Bansatz}-\eqref{uansatz}
are automatically divergence free.

The induction equation is
\begin{equation}
 \mathcal{L}_B u = 0.
\end{equation}
Using \eqref{ident} and that $\div u =\div B = 0$,
 we have $
 \mathcal{L}_B u = \curl (u\times B)
$
and consequently
\begin{align}
u\times B\nonumber
 &= C(A)\nabla \psi - F(\psi)\nabla A
 +\left( e_z \cdot (\nabla \psi \times \nabla A)\right)e_z.
\end{align}
If we take $\psi = G(A)$ for a function
$G$ this last term vanishes. Taking the curl we then find
\begin{equation}
 \curl ( C(A) \nabla \psi - F(\psi) \nabla A) =
 \left( C'(A) + F'(\psi) \right) \nabla A\times \nabla \psi
 =0.
\end{equation}
We therefore see that if we take $B, u$
as in \eqref{Bansatz}-\eqref{uansatz} with
$\psi = G(A)$ the induction equation holds automatically.
We now consider the momentum equation \eqref{eul}. Using
\begin{equation}
 \curl (C(A) e_z) = C'(A)\nabla A \times e_z,
 \qquad
 \curl (\nabla A \times e_z)
 = -\Delta A \, e_z,
\end{equation}
we find
\begin{equation}
 B \times \curl B
 =(C(A)e_z + e_z\times \nabla A) \times (C'(A) \nabla A  \times
 e_z + \Delta A e_z)
 = \left( CC'(A) + \Delta A\right)\nabla A,
\end{equation}
and in the same way
\begin{equation}
 u\times \curl\, u
 = (FF'(\psi) + \Delta \psi )\nabla \psi,
\end{equation}
and the momentum equation \eqref{eul} reduces to
\begin{equation}
   \left( FF'(\psi) + \Delta \psi\right)\nabla \psi-\left( CC'(A) + \Delta A\right)
    \nabla A = -\nabla P.
\end{equation}
Provided $\nabla A$ is non-vanishing except at possibly a single
point with $k$-times differentiable travel time this gives $P = P(A)$, and we then
find the Grad-Shafranov equation
\begin{equation}
  (1 - |G'(A)|^2)\Delta A - G''(A)G'(A)|\nabla A|^2
  + FF'(A) - CC'(A) = -P'(A).
\end{equation}
abusing notation and writing $FF'(A) =
 FF'(G(A))$.
 We then have an analog of Corollary \ref{corrol}.
\begin{lemma}\label{corrol2}
Define $B, u$ as in \eqref{Bansatz}-\eqref{uansatz} where $\psi = G(A)$
for a function $G$ and where $\nabla A$ is non-vanishing except at possibly a single
point with $k$-times differentiable travel time. Then $u, B, \Omega \subset \{z = 0\}$ form
a $C^k$ steady solution to the free boundary problem \eqref{mhd1}-\eqref{mhd7}
with $B_{\mathsf{ext}} \cdot \hat{n} = 0$ provided $A$ solves
\begin{alignat}{2}
  (1 - |G'(A)|^2)\Delta A - G''(A)G'(A)|\nabla A|^2
  + FF'(A) - CC'(A) &= -P'(A),
 &&\quad \text{\rm in }  \Omega,
  \label{A1}\\
  A&=0, &&\quad  \text{\rm on }  \partial \Omega,\\ \label{A3}
   \tfrac{1}{2}(|G'(0)|^2-1) |\partial_nA|^2&=H(0) -  \tfrac{1}{2}|B_{\mathsf{ext}} |^2,
    &&\quad
     \text{\rm on } \partial \Omega,
\end{alignat}
where $H(A) = P(A) - \frac{1}{2} F(G(A))^2 + \frac{1}{2}C(A)^2$.
\end{lemma}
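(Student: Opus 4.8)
The plan is to verify directly, assembling the computations already performed in the text above, that the ansatz \eqref{Bansatz}--\eqref{uansatz} with $\psi = G(A)$ solves \eqref{mhd1}--\eqref{mhd7} as soon as $A$ satisfies \eqref{A1}--\eqref{A3}; this is the three-dimensional counterpart of the argument giving Corollary \ref{corrol}. First, the fields $C(A)e_z + e_z\times\nabla A$ and $F(\psi)e_z + e_z\times\nabla\psi$ are manifestly divergence free, so \eqref{mhd3}--\eqref{mhd4} hold. The induction equation \eqref{mhd2}, written as $\mathcal{L}_B u = \curl(u\times B) = 0$, was shown above to reduce, upon taking $\psi = G(A)$ (which both annihilates the $e_z$-component of $u\times B$ and makes $\nabla\psi$ parallel to $\nabla A$), to $(C'(A) + F'(\psi))\,\nabla A\times\nabla\psi = 0$, which holds identically. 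For the momentum equation in the form \eqref{eul}, the identities computed above give $u\times\curl u - B\times\curl B = (FF'(\psi) + \Delta\psi)\nabla\psi - (CC'(A) + \Delta A)\nabla A$, so $\nabla P$ is parallel to $\nabla A$; under the travel-time hypothesis this forces $P = P(A)$ for a $C^k$ function $P$ by Lemma \ref{structuretheorem1}, and expanding $\Delta\psi = \Delta G(A)$ by the chain rule turns \eqref{eul} into precisely the Grad--Shafranov equation \eqref{A1}. This establishes \eqref{mhd1}--\eqref{mhd4} in $\Omega$.

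It then remains to check the boundary conditions. Since the domain is a cylinder over $\Omega\subset\{z=0\}$, the outward normal $\hat{n}$ lies in the $(r,\theta)$-plane, so $e_z\cdot\hat{n} = 0$; and since $A = 0$ on $\partial\Omega$ we have $\psi = G(0)$ constant there, so both $\nabla A$ and $\nabla\psi$ are normal to $\partial\Omega$ while $e_z\times\nabla A$ and $e_z\times\nabla\psi$ are tangent. Hence $B\cdot\hat{n} = u\cdot\hat{n} = 0$, which gives \eqref{mhd7} and, in the non-penetrating case $B_{\mathsf{ext}}\cdot\hat{n} = 0$, also \eqref{mhd6}. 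For \eqref{mhd5}, evaluate $P = P(A)$ on $\partial\Omega$: there $|B|^2 = C(0)^2 + |\partial_n A|^2$ and $|u|^2 = F(G(0))^2 + |G'(0)|^2|\partial_n A|^2$, so by \eqref{Pdef}, $p = P(0) - \tfrac12|u|^2 + \tfrac12|B|^2 = H(0) + \tfrac12(1 - |G'(0)|^2)|\partial_n A|^2$ with $H(A) = P(A) - \tfrac12 F(G(A))^2 + \tfrac12 C(A)^2$ as in the statement. Equating this with $p = \tfrac12|B_{\mathsf{ext}}|^2$ is exactly \eqref{A3}, completing the verification.

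The argument is essentially bookkeeping, and I anticipate no genuine obstacle; the one point requiring care is the passage from $\nabla P\parallel\nabla A$ to $P = P(A)$ in the asserted regularity class, which is precisely where the $k$-times differentiable travel-time assumption is used, exactly as in Proposition \ref{structuretheorem}. Beyond that, one must only keep straight the several functions $C, F, G, P, H$ and the chain-rule expansions relating $\Delta\psi$ and $\nabla\psi$ to $A$.
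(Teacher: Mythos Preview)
Your proposal is correct and follows essentially the same route as the paper: the computations leading up to the lemma (divergence-free ansatz, vanishing of $\curl(u\times B)$ once $\psi=G(A)$, and reduction of the momentum equation to the Grad--Shafranov form) are exactly what the paper records in the text preceding the statement, and your verification of the boundary conditions via $p = P(0) - \tfrac12|u|^2 + \tfrac12|B|^2 = H(0) + \tfrac12(1-|G'(0)|^2)|\partial_n A|^2$ is the intended bookkeeping that makes \eqref{A3} equivalent to \eqref{mhd5}. One minor remark: in the direction actually asserted by the lemma (sufficiency of \eqref{A1}--\eqref{A3}), you do not strictly need the travel-time hypothesis or Lemma~\ref{structuretheorem1}, since $P$ is \emph{given} as a function of $A$; that hypothesis is only needed for the converse derivation carried out in the text, but your treating both directions together does no harm.
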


Lemma \ref{corrol2} puts us again in the setting of Serrin \cite{S71} and so just as in Theorem
\ref{rigthm2d} we have
\begin{thm}\label{3dthm1}
Let $k\geq 4$ and let $B, u, A, \Omega$ be as in  Lemma \ref{corrol2},  a $C^k$ steady free boundary solution of the system \eqref{mhd1}--\eqref{mhd7} with trivial external field and current  $B_{\mathsf{ext}} =j_{\mathsf{ext}} =0$. Suppose that $B$ is non-vanishing except at a single point with $k$-times differentiable travel time.  Suppose additionally that $|B|\neq |u|$ away from the null point $p$ of $B$ and that $\lim_{x\to p} |u(x)|/|B(x)|\neq 1$.   Then $\Omega= B_R(0)$ with $R$ such that ${\rm vol}(B_R(0))= {\rm vol}(\Omega_0)$ and the velocity and magnetic field are circular  (i.e. $A:= A(r)$ and $\psi:= \psi(r)$).
\end{thm}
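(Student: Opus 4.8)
The plan is to reduce Theorem \ref{3dthm1} to the overdetermined elliptic problem of Lemma \ref{corrol2} and then invoke Serrin's theorem (Theorem \ref{serrin}) in exactly the same fashion as the proof of Theorem \ref{rigthm2d}. First I would observe that the hypotheses of the present theorem are precisely those needed to apply Lemma \ref{corrol2}: the solution is $C^k$, $k\geq 4$, the magnetic field $B$ is non-vanishing except at a single null point with a $k$-times differentiable travel time, and $\psi = G(A)$ (which is itself a consequence of the structure results, since the induction equation forces the level sets of $A$ and $\psi$ to coincide). Hence $A$ solves the Grad--Shafranov equation \eqref{A1} together with the Dirichlet condition $A=0$ on $\partial\Omega$ and the Neumann condition \eqref{A3}.

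Next I would exploit the vanishing of the external field, $B_{\mathsf{ext}}=0$. Then \eqref{A3} becomes $\tfrac{1}{2}(|G'(0)|^2-1)|\partial_nA|^2 = H(0)$ on $\partial\Omega$. Since $|G'|^2$ is continuous and the non-commensurability hypothesis $|B|\neq|u|$ away from $p$ together with $\lim_{x\to p}|u(x)|/|B(x)|\neq 1$ gives a constant $c_0>0$ with $\bigl|1-|G'(A)|^2\bigr|\geq c_0$ throughout $\bar\Omega$, the coefficient $\tfrac{1}{2}(|G'(0)|^2-1)$ is a nonzero constant; hence $|\partial_n A|$ is constant on $\partial\Omega$. (Strictly, this determines $|\partial_n A|^2 = 2H(0)/(|G'(0)|^2-1)$, a fixed number, so $\partial_n A$ is a constant up to sign, and since $A$ has one sign in $\Omega$ the sign of $\partial_n A$ is itself constant on the connected boundary.)

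Then I would put \eqref{A1} into the form required by Serrin's theorem. Dividing through by $1-|G'(A)|^2$, which is licit because of the uniform lower bound $c_0$, equation \eqref{A1} reads $\Delta A + f(A,|\nabla A|) = 0$ with
\[
f(A,|\nabla A|) = \frac{-G''(A)G'(A)|\nabla A|^2 + FF'(A) - CC'(A) + P'(A)}{1-|G'(A)|^2},
\]
which is of class $C^1$ in $(A,|\nabla A|)$ given the regularity $G\in C^k$ with $k\geq 4$ (so $G''\in C^2$) and $F,C,P$ of class $C^{k-1}$ as provided by Proposition \ref{structuretheorem} / Lemma \ref{corrol2}; note $g\equiv 1$ here. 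Since $A=0$ on $\partial\Omega$ and $\nabla A$ vanishes only at the single interior null point, the Hopf lemma (or simply the fact that $A$ cannot change sign without an interior critical level set) shows $A$ has a single sign in $\Omega$. All the hypotheses of Theorem \ref{serrin} are therefore met, so $\Omega$ is a ball and $A$ is radially symmetric. The volume normalization $R$ is then fixed by ${\rm vol}(B_R(0))={\rm vol}(\Omega_0)$, and $\psi = G(A)$ is radial as well, giving the circular velocity and magnetic field.

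The only real subtlety — and the step I would flag as the main obstacle — is confirming that the constant $c_0$ in $\bigl|1-|G'(A)|^2\bigr|\geq c_0$ genuinely holds uniformly up to and including the null point $p$: away from $p$ it follows from $|B|\neq|u|$ by compactness of level sets, while at $p$ one needs the limiting hypothesis $\lim_{x\to p}|u(x)|/|B(x)|\neq1$ together with the fact that $G'(A(p))$ is exactly this limiting ratio (since $u^\perp = G'(A)B^\perp$). Once this uniform bound is in hand, the division by $1-|G'(A)|^2$ is justified everywhere and the reduction to Serrin's setting goes through verbatim as in Theorem \ref{rigthm2d}; the three-dimensional $z$-independence has been entirely absorbed into the ansatz \eqref{Bansatz}--\eqref{uansatz} and plays no further role.
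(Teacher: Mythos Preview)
Your proposal is correct and follows essentially the same approach as the paper, which simply states that ``Lemma \ref{corrol2} puts us again in the setting of Serrin \cite{S71} and so just as in Theorem \ref{rigthm2d}'' without spelling out further details. In fact your write-up is more thorough than the paper's treatment, carefully verifying the uniform lower bound on $|1-|G'(A)|^2|$, the sign condition on $A$, and the $C^1$ regularity of the nonlinearity needed to invoke Theorem~\ref{serrin}.
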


\subsection{$\phi$-independent solutions}\label{phiind}
These solutions can either occupy cylindrical domains or toroidal domains which wrap around the $z$-axis (e.g. the Tokamak). In the first case the equilibrium occupies an unbounded
domain in $\R^3$ but in the second case it occupies a bounded domain,
 Theorem \ref{serrin} can still be used to get a rigidity statement,
but it is far weaker than the rigidity in the cylindrical or 2D case. To
get this result, we make the ansatz
\begin{equation}
 B = \frac{1}{r} C(A) e_\phi + \frac{1}{r}
 e_\phi \times\nabla A,
 \qquad A := A(r,z),
\end{equation}
and
\begin{equation}
 u = \frac{1}{r} F(\psi) e_\phi + \frac{1}{r}e_\phi
 \times \nabla \psi,
  \qquad \psi := \psi(r,z).
\end{equation}
These are both divergence-free and axisymmetric, $\mathcal{L}_{r e_\phi}B
= \mathcal{L}_{re_\phi} u = 0$. We also have
\begin{equation}
  u \times B =
  \frac{C(A)}{r^2} \nabla \psi - \frac{F(\psi)}{r^2}
   \nabla A
  + \frac{1}{r}\left( e_\phi\cdot(\nabla \psi \times \nabla A)\right) e_\phi.
\end{equation}
Taking $\psi = G(A)$ as in the
$z$-independent case the last term drops out. Taking the curl, we get
\begin{multline}
 \curl(u\times B)
 = \nabla \left(\frac{C(A)}{r^2}\right)\times \nabla \psi
 - \nabla \left(\frac{F(\psi)}{r^2}\right)\times \nabla A
 \\
 =
 \frac{1}{r^2} \left( C'(A) + F'(\psi)\right) \nabla A  \times \nabla
 \psi +
 \frac{1}{r^3} \left( F(\psi) \nabla r \times \nabla A -  C(A)
 \nabla r \times \nabla \psi\right).
\end{multline}
The first term vanishes when $\psi = G(A)$.
To get the second term to vanish
we must take additionally
\begin{equation}
 F(G(A)) - C(A) G'(A) = 0.
\end{equation}
In other words, thinking of $F, C$ as given, to
satisfy the induction equation, $G$ must be chosen so
\begin{equation}
 \frac{G'(A)}{F(G(A))} = C(A).
 \label{parametersrequirement}
\end{equation}
As for the momentum equation \eqref{eul}, we compute
\begin{equation}
 \frac{1}{r} e_\phi \times \nabla A
 = \frac{1}{r} \pa_z A e_r - \frac{1}{r} \pa_r A e_z,
\end{equation}
and
\begin{equation}
 \curl \left( \frac{1}{r} e_\phi \times \nabla A\right)
 = \left(\pa_r\left( \frac{\pa_r A}{r}\right) +
 \frac{1}{r}\pa_z^2 A\right) e_\phi
 = \Delta^* A \frac{e_\phi}{r},
\end{equation}
where we have introduced the Grad-Shafranov operator
$
 \Delta^* = \pa_r^2 + \pa_z^2 - \frac{1}{r}\pa_r.
$
Since $\curl\, \left(\frac{1}{r} e_\phi\right) =  0$,
\begin{equation}
 \curl B = C'(A) \nabla A \times \frac{e_\phi}{r}
 - \Delta^* A \frac{e_\phi}{r}.
\end{equation}
Performing a similar calculation for $u$ we find
\begin{equation}
 B \times \curl B = \left(\Delta^*A + CC'(A) \right)
 \frac{\nabla A}{r^2}, \qquad
 u \times \curl u
 = \left(\Delta^* \psi + FF'(\psi)\right)
 \frac{\nabla \psi}{r^2}.
\end{equation}
As a result, the momentum equation reads
\begin{equation}
  -\left( \Delta^* A + FF'(A)\right) \nabla A
  +\left(\Delta^*\psi + CC'(\psi)\right)\nabla \psi
  =-r^2\nabla P,
\end{equation}
which, just as in the $z$-indepedent case, leads to
the Grad-Shafranov equation
\begin{equation}
 (1 - |G'(A)|^2)\Delta^* A
 - G''(A) G'(A) |\nabla A|^2 + CC'(A) - FF'(A)
 = r^2 P'(A).
\end{equation}
As a consequence we have
\begin{lemma}\label{corrol3}
Define $B, u$ as in \eqref{Bansatz}-\eqref{uansatz} where $\psi = G(A)$
for a function $G$ and take $P = P(A)$. Then $u, B, \Omega$ form
a $C^k$ steady solution to the free boundary problem \eqref{mhd1}-\eqref{mhd7}
with $B_{\mathsf{ext}} \cdot \hat{n} = 0$ provided $G$ is chosen to satisfy
\begin{equation}
G'(A) = C(A)F(G(A)),
\end{equation}
and $A$ is a solution to
\begin{alignat}{2}
  (1 - |G'(A)|^2)\Delta^* A - G''(A)G'(A)|\nabla A|^2
  + CC'(A) - FF'(A) &= r^2P'(A), \quad &&\text{\rm in } \Omega, \label{A1phi}\\
  \label{A2phi}
  A&=0,   \quad  &&\text{\rm on }  \partial \Omega,\\
   \label{A3phi}
   \tfrac{1}{2r^2}(|G'(0)|^2-1) |\partial_nA|^2&=H(0,r) -  \tfrac{1}{2}|B_{\mathsf{ext}} |^2,
    \quad &&\text{\rm on } \partial \Omega,
\end{alignat}
where $H(A, r) = P(A) - \frac{1}{2r^2} F(G(A))^2 + \frac{1}{2r^2} C(A)^2$.
\end{lemma}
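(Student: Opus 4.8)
The lemma is a sufficiency statement --- the converse of the reduction carried out just above it --- and most of the computation is already present there; the plan is to assemble those pieces and then pin down the boundary data. First, the axisymmetric ansatz $B = \tfrac{1}{r}C(A)e_\phi + \tfrac{1}{r}e_\phi\times\nabla A$ and $u = \tfrac{1}{r}F(\psi)e_\phi + \tfrac{1}{r}e_\phi\times\nabla\psi$ produces vector fields that are automatically divergence free and $\phi$-independent, so \eqref{mhd3}--\eqref{mhd4} hold. Second, taking $\psi = G(A)$ annihilates the component of $u\times B$ along $e_\phi$ (since then $\nabla\psi\times\nabla A = 0$), and the identity $\mathcal{L}_B u = \curl(u\times B)$, valid because $\div u = \div B = 0$, combined with the curl computation above, shows that the induction equation $\mathcal{L}_B u = 0$ is equivalent to the single scalar constraint \eqref{parametersrequirement} relating $G'$ to $C$ and $F$ --- which is exactly the hypothesis of the lemma. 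Third, inserting the computed expressions for $B\times\curl B$ and $u\times\curl u$ in terms of the Grad--Shafranov operator $\Delta^* = \partial_r^2 + \partial_z^2 - \tfrac{1}{r}\partial_r$ into the momentum equation \eqref{eul}, and using $\psi = G(A)$ together with the hypothesis $P = P(A)$, both sides become parallel to $\nabla A$, so the momentum equation reduces --- exactly as in the derivation above --- to the Grad--Shafranov equation \eqref{A1phi}. Thus \eqref{mhd1}--\eqref{mhd4} hold throughout $\Omega$.

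It then remains to verify the boundary conditions \eqref{mhd5}--\eqref{mhd7}; recall that $B_{\mathsf{ext}}\cdot\hat{n} = 0$ is assumed, so \eqref{mhd6} reduces to $B\cdot\hat{n} = 0$. Since $A \equiv 0$ on $\partial\Omega$, the tangential derivative of $A$ along $\partial\Omega$ vanishes, so $\nabla A$ --- and hence $\nabla\psi = G'(A)\nabla A$ --- is normal to $\partial\Omega$ within the $(r,z)$ half-plane, while $e_\phi$ is tangent to the axisymmetric boundary and $e_\phi\times\nabla A$, $e_\phi\times\nabla\psi$ are the in-plane $90^\circ$ rotations of $\nabla A$, $\nabla\psi$, hence also tangent. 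Therefore $B\cdot\hat{n} = u\cdot\hat{n} = 0$ on $\partial\Omega$, giving \eqref{mhd6} and \eqref{mhd7}.

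The one computation not already present above is the pressure balance \eqref{mhd5}, and it is this that forces the Neumann-type relation \eqref{A3phi}. Using that $e_\phi$ is orthogonal to the $(r,z)$ plane and $|e_\phi\times\nabla A| = |\nabla A|$, one finds $|B|^2 = r^{-2}\big(C(A)^2 + |\nabla A|^2\big)$ and, with $\psi = G(A)$, $|u|^2 = r^{-2}\big(F(G(A))^2 + |G'(A)|^2|\nabla A|^2\big)$. Solving the Bernoulli relation $P(A) = p + \tfrac{1}{2}|u|^2 - \tfrac{1}{2}|B|^2$ for the pressure gives
\be
p = H(A,r) + \tfrac{1}{2r^2}\big(1 - |G'(A)|^2\big)|\nabla A|^2, \qquad H(A,r) = P(A) - \tfrac{1}{2r^2}F(G(A))^2 + \tfrac{1}{2r^2}C(A)^2,
\ee
which is precisely the $H$ appearing in the statement. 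Restricting to $\partial\Omega$, where $A = 0$ and $|\nabla A| = |\partial_n A|$, and imposing $p = \tfrac{1}{2}|B_{\mathsf{ext}}|^2$ yields exactly \eqref{A3phi}. This completes the verification that $(u,B,\Omega)$ solves \eqref{mhd1}--\eqref{mhd7}.

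I do not anticipate a genuine obstacle: the argument is essentially bookkeeping in cylindrical coordinates, and the only points requiring care are (i) correctly tracking the $1/r$ factors and the replacement of the Laplacian by $\Delta^*$ in the momentum reduction, and (ii) checking that the boundary evaluation of the Bernoulli function reproduces \eqref{A3phi} with the stated $H$. As a minor technical point one should also record --- exactly as in Corollary \ref{corrol} and Lemma \ref{corrol2} --- that the constructed fields inherit the claimed $C^k$ regularity: given $A\in C^k$ solving \eqref{A1phi} and sufficiently regular $C$, $F$, $P$, the function $G$ obtained from the ODE in the hypothesis, and thus $\psi = G(A)$, $u$, and $B$, are all of class $C^k$.
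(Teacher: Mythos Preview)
Your proposal is correct and follows essentially the same approach as the paper: the paper carries out all of the interior computations (divergence-free ansatz, induction equation via $\curl(u\times B)$, momentum equation via $B\times\curl B$ and $u\times\curl u$ leading to the Grad--Shafranov equation) in the text immediately preceding the lemma and then states the lemma ``as a consequence,'' while you explicitly assemble those pieces and add the boundary-condition verification. Your explicit derivation of the pressure formula $p = H(A,r) + \tfrac{1}{2r^2}(1-|G'(A)|^2)|\nabla A|^2$ and its restriction to $\partial\Omega$ to recover \eqref{A3phi} is the one step the paper leaves implicit (by analogy with Corollary~\ref{corrol} and Lemma~\ref{corrol2}), and you carry it out correctly.
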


In the case of $u=0$, this result appears as Lemma 1.1 in \cite{DEPS21}.
The above equation and boundary condition are not invariant under arbitrary reflections and so there is not such a strong
rigidity result in this setting. They are however invariant under reflections
$z \mapsto -z$ and so the arguments from \cite{S71} give
\begin{thm}\label{3dthm2}
Let $k\geq 4$ and let $B, u, A, \Omega$ be as in Lemma \ref{corrol3},  a $C^k$ steady free boundary solution of the system \eqref{mhd1}--\eqref{mhd7} with trivial external field and current  $B_{\mathsf{ext}} =j_{\mathsf{ext}} =0$. Suppose additionally that $B\neq u$ along any magnetic field line.
 Then $\Omega$, the velocity, and magnetic field are symmetric with
 respect to the reflection $z\mapsto -z$.
\end{thm}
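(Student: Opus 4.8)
\noindent\emph{Sketch of the proof of Theorem~\ref{3dthm2}.}
The plan is to run Serrin's moving plane method \cite{S71}, exploiting only the single reflection symmetry the $\phi$-independent problem possesses --- invariance across the planes $\{z=\lambda\}$ --- rather than the full rotational symmetry available in Serrin's original setting. By Lemma~\ref{corrol3} with $B_{\mathsf{ext}}=0$, the flux function $A$ solves the overdetermined problem \eqref{A1phi}--\eqref{A3phi}, in which \eqref{A3phi} reads $(|G'(0)|^2-1)|\partial_n A|^2 = 2r^2 P(0) + C(0)^2 - F(G(0))^2$, so $|\partial_n A|$ is a prescribed function of $r$ alone on $\partial\Omega$. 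Three structural features drive the argument: (i) the Grad--Shafranov operator $\Delta^*$ and every coefficient in \eqref{A1phi} depend on $(r,z)$ only through $r$ and $A$, hence are invariant under $z\mapsto-z$ but not under rotations or $r$-translations --- which is precisely why no stronger, Serrin-type rigidity can hold here; (ii) both the Dirichlet datum and the Neumann datum on $\partial\Omega$ are invariant under $z\mapsto-z$, being functions of $r$; and (iii) the hypothesis that $B\ne u$ along every field line forces $|G'(A)|\ne 1$ on $\Omega$ (as in the proof of Theorem~\ref{rigthm2d}, now also using the constraint $G'(A)=C(A)F(G(A))$), so $1-|G'(A)|^2$ is bounded away from zero and \eqref{A1phi} is uniformly elliptic on $\Omega$, which in the toroidal case is bounded away from the axis $\{r=0\}$.

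First, exactly as in the proof of Theorem~\ref{rigthm2d}, since $A=0$ on $\partial\Omega$ and $\nabla A$ vanishes only at the null point of $B$, the function $A$ is single-signed; normalize so $A>0$ in $\Omega$. Now for $\lambda<\sup_\Omega z$ let $\Omega_\lambda=\Omega\cap\{z>\lambda\}$, let $\Omega_\lambda'$ be its reflection across $\{z=\lambda\}$, and set $w_\lambda(r,z)=A(r,2\lambda-z)-A(r,z)$ on $\Omega_\lambda'$. Because \eqref{A1phi} is invariant under $z\mapsto 2\lambda-z$, subtracting it for $A_\lambda(r,z):=A(r,2\lambda-z)$ from it for $A$ produces a uniformly elliptic linear equation $L_\lambda w_\lambda=0$, whose principal coefficient is comparable to $1-|G'(\cdot)|^2$ and whose remaining coefficients are controlled by $A$, $A_\lambda$ and their gradients. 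For $\lambda$ just below $\sup_\Omega z$ one has $\Omega_\lambda'\subset\Omega$, and $w_\lambda\le 0$ on $\partial\Omega_\lambda'$: on the flat face $\{z=\lambda\}$ it vanishes, while on the reflected piece of $\partial\Omega$ (which lies in $\overline\Omega$) one has $A_\lambda=0\le A$. The maximum principle --- beginning with its narrow-domain version and then the strong form --- gives $w_\lambda\le 0$ in $\Omega_\lambda'$ throughout this range of $\lambda$. Decreasing $\lambda$, let $\lambda_0$ be the first value at which $\Omega_\lambda'\subset\Omega$ is about to fail: either (a) $\partial\Omega_{\lambda_0}'$ is internally tangent to $\partial\Omega$ at some $x_0\notin\{z=\lambda_0\}$, so that $w_{\lambda_0}$ attains its maximum $0$ at the interior point $x_0$ and the strong maximum principle forces $w_{\lambda_0}\equiv 0$; or (b) the outward normal of $\partial\Omega$ at some $x_0\in\partial\Omega\cap\{z=\lambda_0\}$ is orthogonal to $e_z$, in which case the matched Dirichlet and ($r$-only) Neumann data make both $w_{\lambda_0}$ and $\nabla w_{\lambda_0}$ vanish at the corner $x_0$, and Serrin's corner (edge-point) lemma together with $L_{\lambda_0}w_{\lambda_0}=0$ again forces $w_{\lambda_0}\equiv 0$. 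Either way $A(r,2\lambda_0-z)\equiv A(r,z)$, i.e.\ $A$ is symmetric about $\{z=\lambda_0\}$; since \eqref{A1phi} and the ansatz for $u,B$ in this subsection are invariant under $z$-translation we may normalize $\lambda_0=0$, after which $A$, $\Omega=\{A>0\}$, $\psi=G(A)$, and hence $u$ and $B$, all inherit up-down symmetry.

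I expect the main obstacle to be the boundary analysis at the critical plane in case (b): one must verify that, with the $r$-dependent (rather than constant) Neumann datum, $w_{\lambda_0}$ still vanishes at the corner $x_0$ to the order demanded by Serrin's corner lemma, and that the lemma --- usually stated for $\Delta$ --- carries over to the Grad--Shafranov operator with its $\tfrac{1}{r}\partial_r$ drift near $\partial\Omega$. A secondary point is the quantitative derivation of uniform ellipticity --- that $|G'(A)|\ne 1$ with a definite bound, in particular near the axis and near $\partial\Omega$ --- from the non-degeneracy hypothesis $B\ne u$; one should also check that the regularity $k\ge 4$, together with the travel-time assumption implicit in Lemma~\ref{corrol3} (cf.\ the remark after Proposition~\ref{structuretheorem}), renders $G'$, $G''$, $F$, $C$, $P'$ of class $C^1$, which suffices for the elliptic maximum principle and the boundary-point lemma. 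In the unbounded cylindrical case one applies the standard modifications of the moving plane scheme needed to initiate it, and reaches the same conclusion.
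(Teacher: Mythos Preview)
Your proposal is correct and follows exactly the route the paper indicates: the paper's own ``proof'' consists of the single observation that \eqref{A1phi}--\eqref{A3phi} are invariant under $z\mapsto -z$ and a citation of Serrin \cite{S71}, and you have supplied the detailed moving-plane sketch (reflection in $\{z=\lambda\}$, the subtraction $w_\lambda$, maximum principle, Hopf and corner lemma at the critical position) that this citation stands for.

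One small point worth flagging: from the ansatz and the constraint $F(G(A))=C(A)G'(A)$ one computes $u=G'(A)\,B$, so the hypothesis $B\ne u$ along field lines is equivalent only to $G'(A)\ne 1$, not to $|G'(A)|\ne 1$; uniform ellipticity of \eqref{A1phi} really needs the latter. This is a wrinkle in the paper's stated hypothesis rather than in your argument --- the intended condition is presumably $|B|\ne|u|$ (equivalently $|G'(A)|\ne 1$), parallel to Theorem~\ref{rigthm2d} --- and your sketch goes through cleanly once that is in place.
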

This theorem restricts the shape of the tori in which a free boundary equilibrium without external forcing can possibly reside.  As in our 2D construction, external coils could be used to impose external fields to bend these equilibria at will.  We remark that Theorem 1.2 of the recent work \cite{DEPS21} constructs solutions of the overdetermined problem \eqref{A1phi}--\eqref{A3phi}, resulting in axisymmetric free boundary Euler solutions exhibiting this up-down symmetry.
We remark that breaking of up-down symmetry plays an important role in enhancing turbulent transport in Tokamaks \cite{B16,PBP11}.

\subsection{Breaking symmetry using external fields}

Here we show how a current sheet enveloping the toroidal plasma can be designed to produce free boundary equilibria.
We have the following Lemma
\begin{lemma}\label{lemcoils2}
Let $\Omega\subset \mathbb{R}^2$ be any simply connected domain  with real analytic boundary which corresponds to  the cross-section in the $\{\phi=0\}$ plane of an axisymmetric toroidal domain $T$.
Let $f:\partial \Omega \to \mathbb{R}$ (extended as a $\phi$-independent function on $T$) be a given analytic function with $f>0$.  Then, there exists a toroidal surface $T'$ with cross-section $\Omega'$ surrounding $T$ and a scalar function $j: \Gamma \to \mathbb{R}$  such that there is a vector potential $A_{\mathsf{ext}}:\mathbb{R}^2 \setminus T\to \mathbb{R}^3$ to
\begin{alignat}{2}\label{oda12}
\Delta A_{\mathsf{ext}} &=  j e_\phi \delta_{T'}\qquad  &&\text{\rm in } \mathbb{R}^3 \setminus T,\\
\hat{n}\cdot \curl A_{\mathsf{ext}} &=0 &&   \text{\rm on } \partial T,\\
|\hat{\tau}\cdot\curl A_{\mathsf{ext}}| &=f  && \text{\rm on } \partial T,\\
e_\phi\cdot\curl A_{\mathsf{ext}} &=0  &&  \text{\rm on } \partial T,\\
\nabla A_{\mathsf{ext}} &\to 0  && \text{\rm as } |x|\to \infty, \label{oda42}
\end{alignat}
where $\hat{n}$ is a normal vector field to $\Omega$ in the $\{\phi = 0\}$
half-plane and $\hat{\tau}$ is a unit tangent vector tangent to $\Omega$ in the $\{\phi = 0\}$
half-plane.
\end{lemma}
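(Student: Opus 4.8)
The plan is to reduce this three--dimensional statement to the planar overdetermined exterior problem for the Grad--Shafranov operator that was solved in Lemma \ref{lemcoils}, and then to invoke that construction. Working in cylindrical coordinates on $\mathbb R^3\setminus T$, I would look for $A_{\mathsf{ext}}$ in the purely toroidal form $A_{\mathsf{ext}}=\tfrac1r\,\Psi\,e_\phi$ with $\Psi=\Psi(r,z)$. A direct computation, using the identities already recorded in Section \ref{phiind}, gives
\begin{equation}
  \curl A_{\mathsf{ext}}=\tfrac1r\,e_\phi\times\nabla\Psi,
  \qquad
  \Delta A_{\mathsf{ext}}=\tfrac{\Delta^*\Psi}{r}\,e_\phi .
\end{equation}
Thus $e_\phi\cdot\curl A_{\mathsf{ext}}=0$ automatically; the source equation \eqref{oda12} becomes $\Delta^*\Psi$ supported on $\Gamma:=\partial\Omega'$; on $\partial\Omega$ the condition $\hat n\cdot\curl A_{\mathsf{ext}}=0$ forces $\nabla\Psi\parallel\hat n$, i.e.\ $\Psi\equiv\text{const}$, which we normalize to $\Psi=0$; and then $|\hat\tau\cdot\curl A_{\mathsf{ext}}|=|\partial_n\Psi|/r$, so the magnitude condition reads $|\partial_n\Psi|=rf$ on $\partial\Omega$. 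Finally, so that $A_{\mathsf{ext}}$ be a genuine smooth vector field on all of $\mathbb R^3\setminus T$ --- the axis $\{r=0\}$ lies in this region --- one needs $\Psi=O(r^2)$ as $r\to0$, i.e.\ no spurious axial current. The lemma is therefore equivalent to the planar claim: given analytic $\partial\Omega$ and analytic $f>0$, there are a smooth Jordan curve $\Gamma\subset\{r>0\}$ encircling $\Omega$ and a function $\Psi$ on $\{r>0\}\setminus\overline\Omega$ with $\Delta^*\Psi$ concentrated on $\Gamma$, $\Psi=0$ and $|\partial_n\Psi|=rf$ on $\partial\Omega$, $\Psi=O(r^2)$ near the axis, and suitable decay at infinity; the coil density $j$ on $\Gamma$ is then read off from the jump of $\partial_n\Psi$.

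To produce $\Psi$ I would follow the Cauchy--Kovalevskaya argument used for Lemma \ref{lemcoils} (cf.\ \cite{ELPS21}). Because $\partial\Omega$ is real analytic and $r>0$ there, the coefficients of $\Delta^*=\partial_r^2+\partial_z^2-\tfrac1r\partial_r$ are analytic near $\partial\Omega$, the operator is elliptic, so $\partial\Omega$ is non--characteristic, and Cauchy--Kovalevskaya yields an analytic solution $w$ of $\Delta^* w=0$ in a one--sided collar neighborhood $\mathcal N\subset\{r>0\}\setminus\overline\Omega$ of $\partial\Omega$ with Cauchy data $w=0$, $\partial_n w=rf$ on $\partial\Omega$. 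Choose $\Gamma\subset\mathcal N$ a smooth Jordan curve surrounding $\Omega$ (this is why the coil must be taken close to the plasma, exactly as after Lemma \ref{lemcoils}), set $\Psi=w$ in the bounded region between $\partial\Omega$ and $\Gamma$, and in the unbounded region outside $\Gamma$ --- which contains the axis --- solve $\Delta^* v=0$ with $v=w$ on $\Gamma$, $v$ regular of order $r^2$ on the axis, and the associated field decaying at infinity. Gluing $w$ and $v$ across $\Gamma$ gives a global $\Psi$ whose distributional Grad--Shafranov Laplacian is a density on $\Gamma$ proportional to $\partial_n v-\partial_n w$; tracking the cylindrical volume element identifies the toroidal current $j$ (up to a power of $r$) with this jump, and $A_{\mathsf{ext}}=\tfrac1r\Psi e_\phi$ solves \eqref{oda12}--\eqref{oda42}.

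The main obstacle is the exterior solve for $\Delta^*$ in a domain that touches the axis. Writing $\Delta^*=r^2\,\div\bigl(r^{-2}\nabla(\,\cdot\,)\bigr)$ shows it is divergence--form elliptic but with a weight degenerating at $r=0$; the naive exterior Dirichlet problem then admits an unwanted constant--order mode near the axis and must be pinned down by the correct second--order vanishing there together with the right decay at infinity. I would bypass the degenerate weight by working with the vector field itself: seek $B_{\mathsf{ext}}$ as a smooth axisymmetric, divergence--free, poloidal, curl--free field on the exterior of $\Gamma$ in $\mathbb R^3$ with prescribed poloidal trace on $\Gamma$ and decay at infinity --- standard exterior potential theory for the ordinary (non--degenerate) vector Laplacian --- and recover $\Psi=r\,(A_{\mathsf{ext}})_\phi$, which then automatically has the $O(r^2)$ behavior. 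A secondary point, needed for the reduction to be truly reversible, is that $\Gamma$ can be taken to be an honest smooth Jordan curve inside $\mathcal N$ and that the resulting $j$ is an ordinary scalar function on it; both are immediate once $w$ is analytic up to $\partial\Omega$ with $\partial_n w=rf>0$. As for Lemma \ref{lemcoils}, analyticity of $f$ is essentially necessary --- an analog of Lemma \ref{sharpness} excludes merely finitely smooth $f$ --- so the hypothesis cannot be relaxed.
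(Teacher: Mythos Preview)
Your reduction to the planar Grad--Shafranov problem via the ansatz $A_{\mathsf{ext}}=\tfrac1r\,\Psi\,e_\phi$ and the Cauchy--Kovalevskaya step in a collar of $\partial\Omega$ match the paper exactly. The divergence is in how you extend past the coil. You pick an arbitrary Jordan curve $\Gamma\subset\mathcal N$ and then must solve an exterior Dirichlet problem for $\Delta^*$ in the unbounded region containing the axis, which forces you to confront the degeneracy at $r=0$ and to argue via three-dimensional potential theory for the vector field. The paper instead follows the proof of Lemma~\ref{lemcoils} verbatim: it takes $\Gamma$ to be a \emph{level set} $\{w=a_0\}$ of the Cauchy--Kovalevskaya solution (a Jordan curve close to $\partial\Omega$ by the implicit function theorem, since $\partial_n w=rf>0$) and simply sets $\Psi\equiv a_0$ outside $\Gamma$. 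This constant extension is trivially $\Delta^*$-harmonic, matches continuously across $\Gamma$, and gives $B_{\mathsf{ext}}=0$ identically outside $T'$, so there is no exterior problem to solve at all; the current density is read off as $j=-\tfrac1r|\nabla w|^2$ on $\Gamma$, exactly as in the two-dimensional case. Your route can be made to work (the translation to an exterior Neumann problem for the scalar harmonic potential of $B_{\mathsf{ext}}$ is sound, and the compatibility condition holds because $\oint_\Gamma\partial_\tau w\,d\ell=0$), but it is unnecessary machinery. Your concern about $O(r^2)$ behavior of $\Psi$ at the axis is a fair observation---in the level-set construction $A_{\mathsf{ext}}=\tfrac{a_0}{r}e_\phi$ is indeed singular on the axis---but since $B_{\mathsf{ext}}$ vanishes there this is a gauge artifact, and the paper does not dwell on it.
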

The proof (using Cauchy–Kovalevskaya) of this lemma is similar to that of Lemma \ref{lemcoils} due to the assumption of axisymmetry.
With it, we obtain

\begin{thm}\label{nonsymmthmext2}
Let $\Omega\subset \mathbb{R}^2$ be any simply connected domain  with real analytic boundary which corresponds to  the cross-section in the $\{\phi=0\}$ plane of an axisymmetric toroidal domain $T$.   Let $u$, $B$ and $T$ be an axisymmetric analytic fixed boundary steady MHD solution of the type described in Lemma \ref{corrol3} and such that
\be
|\partial_n A|^2 >-\tfrac{2H(0)}{1-|G'(0)|^2} \qquad  \text{\rm on} \ \ \partial \Omega,
\ee
where $n$ is the normal to $\Omega$ in the  $\{\phi=0\}$ plane and where $A$, $G$, and $H$ are defined in Lemma \ref{corrol3}.
Then there exists an external magnetic field generated by a singular charge distribution (toroidal current sheet) making $(u,B, T)$ into a free boundary equilibrium.
\end{thm}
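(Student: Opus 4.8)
The plan is to reduce the three-dimensional axisymmetric problem to a purely two-dimensional overdetermined problem in the poloidal cross-section $\Omega \subset \{\phi = 0\}$, and then invoke Lemma \ref{lemcoils2} exactly as Theorem \ref{nonsymmthmext} invokes Lemma \ref{lemcoils}. By Lemma \ref{corrol3}, the given data $(u, B, T)$ already satisfies the interior Grad-Shafranov equation \eqref{A1phi} and the Dirichlet condition \eqref{A2phi}; what is missing is the Neumann-type matching condition \eqref{A3phi}, which couples $|\partial_n A|$ on $\partial\Omega$ to the trace of $|B_{\mathsf{ext}}|^2$. Concretely, I would set $B_{\mathsf{ext}} = \curl A_{\mathsf{ext}}$ with $A_{\mathsf{ext}}$ purely in the $e_\phi$ direction (so that $B_{\mathsf{ext}}$ is poloidal and $B_{\mathsf{ext}}\cdot\hat n = 0$ is automatic from the second line of \eqref{oda12}); then on $\partial T$ one has $|B_{\mathsf{ext}}|^2 = |\hat\tau \cdot \curl A_{\mathsf{ext}}|^2$, since the $e_\phi$ component vanishes by the fourth line of \eqref{oda12}.

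The key steps, in order, are: (1) Read off from Lemma \ref{corrol3} that the target Neumann datum for $B_{\mathsf{ext}}$ on $\partial\Omega$ must be the function
\[
f^2 := \frac{1}{r^2}\bigl(1 - |G'(0)|^2\bigr)|\partial_n A|^2 + 2H(0),
\]
as dictated by \eqref{A3phi} after substituting $|B_{\mathsf{ext}}|^2 = f^2$; here $f$ is $\phi$-independent and analytic on $\partial\Omega$ because $A$, $G$, $H$ are analytic and $\partial\Omega$ is real analytic. (2) Verify that $f^2 > 0$, i.e. that $f$ is a legitimate positive analytic function to which Lemma \ref{lemcoils2} applies: this is precisely the hypothesis $|\partial_n A|^2 > -2H(0)/(1 - |G'(0)|^2)$ combined with the sign of $1 - |G'(0)|^2$ (which one should note is nonzero, else the condition \eqref{A3phi} degenerates — the hypothesis implicitly forces $1 - |G'(0)|^2 \neq 0$ for the inequality to be meaningful, and its sign must match so that the right side is positive). (3) Apply Lemma \ref{lemcoils2} with this $f$ to obtain a toroidal surface $T'$, a current density $j$ on it, and a vector potential $A_{\mathsf{ext}}$ solving \eqref{oda12}--\eqref{oda42}; set $J_{\mathsf{ext}} = j e_\phi \delta_{T'}$ and $B_{\mathsf{ext}} = \curl A_{\mathsf{ext}}$. (4) Check that with this $B_{\mathsf{ext}}$ all of \eqref{mhd11}--\eqref{mhd7} hold: equations \eqref{mhd11}--\eqref{mhd2} hold by construction since $\curl B_{\mathsf{ext}} = \curl\curl A_{\mathsf{ext}} = -\Delta A_{\mathsf{ext}} + \nabla(\div A_{\mathsf{ext}})$ supported on $T'$ (one must choose the gauge so $\div A_{\mathsf{ext}} = 0$, which is consistent with $A_{\mathsf{ext}}$ being $e_\phi$-directed and $\phi$-independent); \eqref{mhd6}--\eqref{mhd7} and the non-penetration $B_{\mathsf{ext}}\cdot\hat n = 0$ hold by the boundary conditions in \eqref{oda12}; and \eqref{mhd5}, the pressure jump condition $p = \tfrac12|B_{\mathsf{ext}}|^2$ on $\partial T$, is exactly \eqref{A3phi} with the chosen $f$, which now holds by design. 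Thus $(u, B, T)$ together with this external field is a free boundary equilibrium.

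The main obstacle is step (4), specifically the bookkeeping that translates the three boundary conditions of Lemma \ref{lemcoils2} (on $\hat n\cdot\curl A_{\mathsf{ext}}$, $|\hat\tau\cdot\curl A_{\mathsf{ext}}|$, and $e_\phi\cdot\curl A_{\mathsf{ext}}$, all imposed on the two-dimensional cross-section $\partial\Omega$ in the half-plane) into the genuinely three-dimensional statements $B_{\mathsf{ext}}\cdot\hat n = 0$ on $\partial T$ and $|B_{\mathsf{ext}}|^2 = f^2$ on $\partial T$ — and checking that the gauge choice making $\div A_{\mathsf{ext}} = 0$ is compatible with the delta-function source $j e_\phi \delta_{T'}$, so that $\curl B_{\mathsf{ext}} = J_{\mathsf{ext}}$ holds distributionally in $\mathbb{R}^3\setminus T$. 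All of this is routine given axisymmetry, since $\phi$-independence collapses every operator to its poloidal reduction; the only subtlety worth spelling out is that the current must be divergence-free as a vector-valued distribution, which forces $j$ to be suitably normalized along $T'$, and this is guaranteed by the construction in Lemma \ref{lemcoils2}. The decay condition \eqref{oda42} and the corresponding decay of $B_{\mathsf{ext}}$ at infinity are immediate. No further argument (no Serrin-type rigidity) is needed here — unlike Theorem \ref{3dthm2}, this is purely a construction, the flexibility counterpart.
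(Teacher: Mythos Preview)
Your proposal is correct and follows exactly the approach the paper intends: the paper gives no explicit proof of Theorem \ref{nonsymmthmext2}, stating only that it follows from Lemma \ref{lemcoils2} in the same way Theorem \ref{nonsymmthmext} follows from Lemma \ref{lemcoils}, and your write-up fills in precisely those details. Your step-by-step reduction---identify the required $f$ from \eqref{A3phi}, check positivity from the hypothesis, apply Lemma \ref{lemcoils2}, and verify the free-boundary conditions---is the intended argument, and your additional care about the sign of $1 - |G'(0)|^2$ and the gauge/divergence bookkeeping goes somewhat beyond what the paper spells out.
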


\appendix

\section{Reconstruction Lemma}
\begin{lemma}\label{structuretheorem1}
Let $ \psi$ be $C^{k+1}(\Omega)$.  Suppose that $|\nabla \psi|$ is non-vanishing except at a single point in $\Omega$, occurring on the level $\{\psi= c_*\}$. Suppose moreover that the particle travel time satisfies
\be
\label{traveltimebd}
\mu(c):= \oint_{\{ \psi=c\}}  \frac{\rmd \ell}{|\nabla \psi|} \in C^k( {\rm rang}(\psi)),
\ee
 and that $w$ is a $C^{k}(\Omega)$ solution of
\be\label{transp}
\nabla^\perp \psi \cdot \nabla w = 0.
\ee
 Then, there exists a $C^{k}$ function $F:\mathbb{R}\to \mathbb{R}$ such that
\be
w = F(\psi).
\ee
\end{lemma}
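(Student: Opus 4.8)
The plan is to produce $F$ by a soft argument, to establish its $C^k$-regularity away from the critical value $c_*$ by the implicit function theorem, and then to handle the regularity of $F$ at $c_*$ --- the only genuine difficulty --- by passing to coordinates near the critical point of $\psi$ in which the travel-time hypothesis is decisive. Let $p$ denote the unique critical point of $\psi$, so $\psi(p)=c_*$. On $\Omega\setminus\{p\}$ the level sets $\{\psi=c\}$ with $c\neq c_*$ are smooth curves, and since $\Omega$ is simply connected with $\psi$ having a single critical point, $p$ is a global extremum and these curves are connected closed loops foliating the annulus $\Omega\setminus\{p\}$. Because $\nabla^\perp\psi\cdot\nabla w=0$ forces $\nabla w\parallel\nabla\psi$ wherever $\nabla\psi\neq0$, $w$ is constant along each level loop, so there is a function $F$ on $\mathrm{rang}(\psi)$ with $w=F\circ\psi$ on $\Omega$; it is continuous, including at $c_*$ where $w(p)=\lim_{c\to c_*}F(c)$. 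Near any $x_0$ with $\nabla\psi(x_0)\neq0$ the implicit function theorem provides a $C^{k+1}$ chart straightening the level sets, in which $w$ depends only on the $\psi$-coordinate, whence $F$ coincides locally with $w$ composed with a $C^{k+1}$ map and $F\in C^k\big(\mathrm{rang}(\psi)\setminus\{c_*\}\big)$. The problem is thus reduced to the one-sided $C^k$-extension of $F$ across $c_*$.

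For the crux, normalise $c_*=0$, $p=0$, and $\psi>0$ on a punctured neighbourhood of $0$. Boundedness of $\mu$ near $0$ already forces $D^2\psi(0)$ to be nondegenerate with $\mu(0)>0$, since otherwise the level loops shrink and $|\nabla\psi|$ vanishes at incompatible rates and $\mu(c)\to\infty$; a linear change of variables then normalises $D^2\psi(0)=\mathrm{Id}$. Two observations drive the argument. First, the coarea formula gives, for $c$ near $0$,
\[
\frac{d}{dc}\int_{\{\psi<c\}}w\,dx=\oint_{\{\psi=c\}}\frac{w}{|\nabla\psi|}\,d\ell=F(c)\,\mu(c),\qquad \frac{d}{dc}\int_{\{\psi<c\}}dx=\mu(c),
\]
using $w\equiv F(c)$ on $\{\psi=c\}$, so that --- $\mu$ being $C^k$ and nonzero at $0$ --- $F$ inherits the regularity of the volume integral $c\mapsto\int_{\{\psi<c\}}w\,dx$. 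Second, introducing the action variable $\mathcal I(c):=\int_0^c\mu(t)\,dt$ (the enclosed area, which is $C^{k+1}$ with $C^{k+1}$ inverse since $\mathcal I'=\mu>0$) and passing to action--angle type coordinates $(q_1,q_2)$ near $0$ adapted to the Hamiltonian flow of $\psi$, one has $\psi=\mathcal I^{-1}\big(\tfrac12(q_1^2+q_2^2)\big)$ while $w$, being constant on the level loops of $\psi$, is a function of $q_1^2+q_2^2$ alone; recovering $w$ in these coordinates and composing with the $C^{k+1}$ reparametrisation $\mathcal I$ then yields $F$. The whole point of the hypothesis $\mu\in C^k$ --- rather than the weaker regularity that a generic finite-regularity normal form gives for free --- is that it is exactly what makes this reduction go through with the $C^k$ control needed, giving $F\in C^k$ near $c_*$ and hence, together with the regularity established away from $c_*$, $F\in C^k(\mathrm{rang}(\psi))$.

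The whole difficulty is this last step: transferring all $k$ derivatives from $w$ to $F$ across the single critical value. Away from it the statement is classical; at it, naive normalisations such as a Morse-lemma change of variables lose derivatives near the degenerate point of the foliation, so one needs genuine quantitative control of the critical point, and the assumption that the travel time $\mu$ is $C^k$ is precisely that control. Checking rigorously that it suffices --- e.g. via a finite-regularity action--angle construction --- is the technical heart of the proof.
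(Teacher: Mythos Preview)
Your outline is sound and, in its overall shape, close to the paper's argument: both invoke action--angle coordinates built from the travel time $\mu$ to transfer regularity from $w$ to $F$ across the critical value. The organisation differs. The paper proceeds more directly: it constructs the angle coordinate $\theta$ globally via the transverse flow of $\nabla\psi$ from the boundary, observes that in $(\psi,\theta)$ the transport equation reads $\mu(\psi)\,\partial_\theta w=0$, and then records the identity
\[
\nabla=\tfrac{1}{|\nabla\psi|}\bigl(\widehat{\nabla^\perp\psi}\,\mu(\psi)\,\partial_\theta+\widehat{\nabla\psi}\,\partial_\psi\bigr),
\]
from which $F\in C^k$ is read off using $w\in C^k$ and $\mu\in C^k$. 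Your version instead separates the regular region (handled by the implicit function theorem) from a neighbourhood of $c_*$, and introduces the coarea identity $F(c)\,\mu(c)=\tfrac{d}{dc}\int_{\{\psi<c\}}w\,dx$ as an alternative route to $F$; this is a genuinely different and rather clean observation that the paper does not use. What it buys is a formula for $F$ involving only integrated quantities, which in principle sidesteps the pointwise blow-up of $1/|\nabla\psi|$.

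That said, your proposal stops short precisely where the paper is also terse: you announce that a finite-regularity action--angle construction, fed the hypothesis $\mu\in C^k$, gives the missing $C^k$ control at $c_*$, and flag this as ``the technical heart'' without carrying it out. Two points deserve care. First, your assertion that boundedness of $\mu$ forces $D^2\psi(0)$ to be nondegenerate is stated heuristically; in the paper this is discussed in a remark (citing \cite{V92} in the $C^\infty$ case and \cite{KKPS20} for finite regularity) but is not used in the proof itself. Second, the coarea route shifts the burden to showing $c\mapsto\int_{\{\psi<c\}}w\,dx\in C^{k+1}$ near $c_*$, which is essentially the same question in disguise; you do not close this loop. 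The paper's proof, while compressed, at least asserts a concrete formula from which the conclusion is drawn; your sketch is arguably more transparent about where the difficulty lies, but as written it is an outline rather than a complete argument.
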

\begin{rem}[Travel time for radial streamfunctions]\label{traveltimerem}
To understand better the assumption \eqref{traveltimebd}, we discuss the case where the domain is a disk and the streamfunction is radial $\psi(x)= f(|x|)$ for $f$ invertible. Without loss of generality, we identify the stagnation point as the set $\{\psi=0\}$. Then
\be
\mu(c)= \oint_{\{ r=f^{-1}(c)\}}  \frac{\rmd \ell}{|f'(r)|} =  \frac{ 2\pi f^{-1}(c)}{|f'(f^{-1}(c))|} = \pi \frac{\rmd}{\rmd c} |f^{-1}(c)|^2.
\ee
Thus, the condition \eqref{traveltimebd} is that $(f^{-1})^2\in C^{k+1}( {\rm rang}(\psi))$. If $f(r) = r^n$, then this requirement is that $n\leq 2$.
The regularity assumption of $ \psi \in C^{k+1}(\Omega)$ then implies $n=2$.
More generally, if $f(r) = r^2 + g(r)$ a calculation shows that
if $h(r) = \frac{g'(r)}{r}$ is $C^k$ near $r = 0$, then $\mu \in C^{k+1}.$
\end{rem}

\begin{proof}[Proof of Lemma \ref{structuretheorem1}]
We introduce angle action coordinates $(x,y)\mapsto (\psi, \theta)$ as follows. Under our assumptions, the level sets $\{ \psi= c\}$ are Jordan curves.  This system allows for a canonical transformation to action-angle variables, $(x,y)\mapsto (\psi,\theta)$ where $\psi$ is the ``radial coordinate" and  the ``angular coordinate" is
\be
\theta(x)= \frac{2\pi}{\mu(\psi(x))} \int_{\Gamma_{x_0(\psi),x} } \frac{\rmd \ell}{|\nabla \psi|}, \qquad
\ee
where $\rmd \ell$ is the arc length differential.
 In the above, fixing a point $x\in \Omega$, the line integral is taken counterclockwise from a point $x_0(\psi)=x_0(\psi(x))$ (defined directly below) on the curve $\{ \psi=\psi(x)\}$ to the point $x$.   Here, $x_0$ is defined in the following way: fix a $y\in \partial \Omega$ and flow transversally to the  levels of $\psi$ via $\dot{Y}_s(y)= \nabla \psi( Y_s(y))$ with $Y_0(y)=y$. Since $\nabla \psi\neq 0$ except at a single point $p$, the curve $Y_s(y)$ crosses each level set of $\psi$ exactly once and tends to the stagnation point at infinite $s$.  Therefore, there exists a unique point $x_0(\psi)$ such that $Y_s(y)$ crosses the level set with value $\psi$. This choice of $x_0$ is differentiable in $\psi$. Then $\theta(x)$ is a $2\pi$--periodic parametrization of the streamline with value $\psi(x)$.
  In these coordinates, \eqref{transp} becomes
\be
\mu(\psi) \partial_\theta w=0,
\ee
whence we conclude that $w:= F(\psi)$ locally, for some function $F\in C^1$ (away from the stagnation point) and continuous on the whole range (see proof of Theorem 1.10 in \cite{HN21}).  To prove that $F$ can be defined globally, we use flow by the transverse direction $\nabla \psi$ from the boundary to the stagnation point $\{\psi= c_*\}$  using the fact that $|\nabla \psi|$ is non-zero except at $\{\psi= c_*\}$. See proof of Lemma 2.1 in \cite{CDG21a}. Away from the stagnation point, regularity of $F$ follows.  For global regularity, note  that
\begin{align}
\nabla 
&=  \tfrac{1}{|\nabla \psi|}\left( \widehat{\nabla^\perp \psi} \mu(\psi) \partial_\theta +  \widehat{\nabla \psi}\partial_\psi \right).
\end{align}
This fact, together with  $w\in C^{k}(\Omega)$ and  $\mu\in C^k( {\rm rang}(\psi))$, imply $F\in C^k( {\rm rang}(\psi))$.
\end{proof}

\section{The coil problem}\label{appendcoil}

We first present the proof by Cauchy–Kovalevskaya.
\begin{proof}[Proof of Lemma \ref{lemcoils}]\label{proofofLem}
Since $\partial \Omega$ and $f$ are both real analytic, by the Cauchy–Kovalevskaya  theorem there exists a collar neighborhood of $\partial \Omega$, call it $\mathcal{N}$ and a scalar function  $A_{\mathsf{H}}: \mathcal{N}\to \mathbb{R}$ such that
\begin{align}
\Delta A_{\mathsf{H}} &= 0 \quad\quad \ \ \  \text{ in } \mathcal{N},\\\label{dirchAh}
A_{\mathsf{H}} &=0 \quad\quad\quad \text{ on } \partial \Omega,\\ \label{normalderAh}
\partial_n A_{\mathsf{H}} &= f \quad\quad\quad \text{ on } \partial \Omega.
\end{align}
Since $f>0$, \eqref{dirchAh} and \eqref{normalderAh} imply that the level sets of $A_{\mathsf{H}}$ are closed Jordan curves surrounding $\partial \Omega$ in a possibly smaller collar neighborhood of the boundary (this follows from the fact that $f>0$ implies that there is a non-zero component of $\nabla A_{\mathsf{H}}$ near  $\partial\Omega$ and so it follows by the implicit function theorem that the level sets must be diffeomorphic to the boundary). These level curves foliate that neighborhood and $A_{\mathsf{H}}$ is strictly monotone across them. By possibly shrinking the neighborhood, we redefine $\mathcal{N}$ to be any such collar region. In light of this, consider now any $a_0\in \mathbb{R}$ such that the level curve $\Gamma := \{A_{\mathsf{H}} =a_0\}$ lies in $\mathcal{N}$.   Define $A_{\mathsf{ext}}: \mathbb{R}^2 \setminus \Omega \to \mathbb{R}$ to be
\be
A_{\mathsf{ext}}
=\begin{cases} \chi (A_{\mathsf{H}}(x)) & x\in \mathcal{N} \\ a_0  & x\in \mathcal{N}^c \setminus \Omega \end{cases},
\qquad \text{where} \qquad
\chi(z)= \begin{cases} z  & z\leq a_0 \\
a_0 & z>a_0
\end{cases}.
\ee
Since $A_{\mathsf{H}}$ is harmonic it is easy to check (and make rigorous by an approximation argument) that
\be
\Delta A_{\mathsf{ext}} = \begin{cases} |\nabla A_\mathsf{H}|^2\chi''(A_\mathsf{H})  & x\in \mathcal{N} \\ 0  & x\in \mathcal{N}^c \setminus \Omega \end{cases} =  -|\nabla A_{\mathsf{H}}|^2\delta_\Gamma,
\ee
in the sense of distributions. Thus $A_{\mathsf{ext}}$ satisfies \eqref{oda1}--\eqref{oda4} with $\Gamma := \{A_{\mathsf{H}} =a_0\}$, $j=- |\nabla A_{\mathsf{H}}|^2$.
\end{proof}
Now we provide result which is similar in spirit to Lemma \ref{lemcoils} and
which is proved using a different method which
has the advantage of giving an essentially explicit formula for the
solution. We first consider the case that
the domain $\Omega$ is the unit disk.
\begin{lemma}\label{lemcoils3}
Let $\Omega$ be the unit disk
and let $f:\partial \Omega \to \mathbb{R}$ be a given analytic function.
Let $\Gamma$ be a $C^1$ perturbation of a circle of radius $R$ for any
$R >1$.
Then there exists a scalar function $j: \Gamma \to \mathbb{R}$  such that there is a solution $A_{\mathsf{ext}}:\mathbb{R}^2 \setminus \Omega\to \mathbb{R}$ to
\begin{alignat}{2}\label{oda1p}
\Delta A_{\mathsf{ext}} &=  j \delta_\Gamma
&&\quad\quad  \text{ \rm in } \mathbb{R}^2 \setminus \Omega,\\
A_{\mathsf{ext}} &=0 &&\quad\quad \  \text{\rm on } \partial \Omega,\\
\partial_n A_{\mathsf{ext}} &=f &&\quad\quad \ \text{\rm on } \partial \Omega,
\label{oda3p}\\
\nabla A_{\mathsf{ext}} &\to 0 &&\quad\quad\  \text{\rm as } |x|\to \infty. \label{oda4p}
\end{alignat}
\end{lemma}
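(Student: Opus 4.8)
The plan is to construct $A_{\mathsf{ext}}$ explicitly in two pieces, using crucially that $\partial\Omega$ is the unit circle. Let $\Omega_1$ be the ``collar'' region bounded by $\partial\Omega$ and $\Gamma$, and $\Omega_2$ the unbounded region exterior to $\Gamma$, so that $\mathbb{R}^2\setminus\Omega=\overline{\Omega_1}\cup\Omega_2$. Since the source in \eqref{oda1p} is carried by $\Gamma$, any solution is harmonic on $\Omega_1$ and on $\Omega_2$, is continuous across $\Gamma$ (a single-layer potential with $L^1$ density is continuous), and its density is necessarily $j=\partial_\nu A_{\mathsf{ext}}|_{\Omega_2}-\partial_\nu A_{\mathsf{ext}}|_{\Omega_1}$ on $\Gamma$, with $\nu$ the outward unit normal of $\Omega_1$ along $\Gamma$. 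Conversely, once one produces a function on $\mathbb{R}^2\setminus\Omega$ that is continuous, harmonic off $\Gamma$, has Cauchy data $(0,f)$ on $\partial\Omega$, and has $\nabla A_{\mathsf{ext}}\to 0$ at infinity, the above jump formula defines a $j$ solving \eqref{oda1p}--\eqref{oda4p}. So the construction reduces to two ingredients: (i) the harmonic function on $\Omega_1$ carrying the Cauchy data $(0,f)$ on $\partial\Omega$; (ii) the harmonic continuation of its trace on $\Gamma$ into $\Omega_2$.

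For (i), expand $f(\theta)=\sum_{n\in\mathbb Z}\widehat f_n e^{in\theta}$; analyticity gives $|\widehat f_n|\le Ce^{-\alpha|n|}$ for some $\alpha>0$. Separation of variables (equivalently, the Cauchy--Kovalevskaya construction made explicit) shows that the unique harmonic function with $A_{\mathsf H}=0$ and $\partial_r A_{\mathsf H}=f$ on $\{r=1\}$ is
\be\label{AHformula}
A_{\mathsf H}(r,\theta)=\widehat f_0\log r+\sum_{n\neq 0}\frac{\widehat f_n}{2n}\bigl(r^{\,n}-r^{-n}\bigr)e^{in\theta},
\ee
and the Fourier bound forces \eqref{AHformula} to converge and be harmonic on some annulus $\{1\le r<\rho\}$ with $\rho>1$ (one may take $\rho$ as close as one likes to $e^{\alpha}$). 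Take $\Gamma$ to lie inside this annulus and set $A_{\mathsf{ext}}:=A_{\mathsf H}$ on $\overline{\Omega_1}$; this is exactly where the hypothesis that $\Gamma$ is a mild perturbation of a circle of radius $R$ is used, constraining $R$ and the size of the perturbation by the width of the analyticity strip of $f$. When $\Gamma$ is exactly $\{r=R\}$ the whole solution is explicit: extending \eqref{AHformula} past $r=R$ by the decaying modes $r^{-|n|}$ and computing the jump of $\partial_r$ across $r=R$ gives $j(\theta)=-\tfrac1R\sum_{n\in\mathbb Z}\widehat f_n R^{|n|}e^{in\theta}$, a genuine function precisely when $R<\rho$.

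For (ii), set $g:=A_{\mathsf H}|_\Gamma$ and solve the exterior Dirichlet problem on $\Omega_2$: a harmonic $A_{\mathsf{ext}}$ with $A_{\mathsf{ext}}=g$ on $\Gamma$. In two dimensions this has a bounded solution, unique among bounded ones, tending to a constant at infinity with $\nabla A_{\mathsf{ext}}=O(|x|^{-2})$, so \eqref{oda4p} holds. Gluing the two pieces, $A_{\mathsf{ext}}$ is continuous on $\mathbb{R}^2\setminus\Omega$ (both pieces equal $g$ on $\Gamma$), harmonic off $\Gamma$, real-analytic up to $\Gamma$ from inside (being $A_{\mathsf H}$) and $C^{1,\beta}$ up to $\Gamma$ from outside when $\Gamma\in C^{1,\beta}$ (for $\Gamma$ merely $C^1$ one still gets $j\in L^1(\Gamma)$ from single-layer estimates). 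Its distributional Laplacian is therefore $\bigl(\partial_\nu A_{\mathsf{ext}}|_{\Omega_2}-\partial_\nu A_{\mathsf{ext}}|_{\Omega_1}\bigr)\delta_\Gamma=:j\,\delta_\Gamma$, which is \eqref{oda1p}; and since $A_{\mathsf{ext}}=A_{\mathsf H}$ near $\partial\Omega$, the Dirichlet condition $A_{\mathsf{ext}}=0$ and the Neumann condition \eqref{oda3p} on $\partial\Omega$ hold by construction.

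The step expected to be the main obstacle is (i): guaranteeing that the Cauchy problem $A_{\mathsf H}=0$, $\partial_n A_{\mathsf H}=f$ on $\partial\Omega$ admits a harmonic solution on all of $\overline{\Omega_1}$, i.e.\ that $A_{\mathsf H}$ has no singularity in the region it must cover. This genuinely fails once $\Gamma$ reaches beyond the analyticity of $f$ — the series \eqref{AHformula} and the expression for $j$ both diverge there — so analyticity of $f$ is essential, and it is responsible for the restriction, already flagged after Lemma~\ref{lemcoils}, that $\Gamma$ be taken not too far from $\Omega$. The remaining points — solvability, uniqueness and decay for the exterior Dirichlet problem, the regularity of $A_{\mathsf{ext}}$ up to $\Gamma$, and the distributional jump identity — are standard.
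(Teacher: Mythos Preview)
Your argument is correct but takes a genuinely different route from the paper's. You build $A_{\mathsf{ext}}$ by gluing the explicit Fourier-series Cauchy extension $A_{\mathsf H}$ to the bounded solution of the exterior Dirichlet problem on $\Omega_2$ with data $A_{\mathsf H}|_\Gamma$, and then read off $j$ as the jump in the normal derivative across $\Gamma$; this is exactly the Cauchy--Kovalevskaya proof of Lemma~\ref{lemcoils} made explicit for the disk, with a Dirichlet-matching cutoff replacing the level-set cutoff. The paper instead represents any candidate solution as (Newton potential of $j\delta_\Gamma$) $+$ (harmonic corrector) $+\ \tfrac{f_{\mathsf{ave}}}{2\pi}\log|z|$, computes the Neumann trace on $\partial\Omega$ as an explicit Fourier multiplier in $j$, and inverts: for $\Gamma=\{|z|=R\}$ this gives $j_k$ directly from $f_k$, and for nearby curves a Neumann series closes the argument. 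Both methods produce the same $j$ on the circle (your formula $j(\theta)=-R^{-1}\sum_n \widehat f_n R^{|n|}e^{in\theta}$ agrees with the paper's) and both are ultimately subject to the same restriction $|f_k|\lesssim R^{-|k|}$, so your caveat that $R$ is constrained by the analyticity strip of $f$ is honest and matches what the paper's proof actually delivers, the ``arbitrarily far'' remark notwithstanding. What the paper's approach buys over yours is the explicit linear map $f_k\mapsto j_k$ at the Fourier level, which is precisely the input to the converse sharpness statement Lemma~\ref{sharpness}, and a perturbative framework for non-circular $\Gamma$ that treats $j$ as the unknown rather than requiring one to re-solve an exterior Dirichlet problem on each new curve.
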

The difference between this and Lemma \ref{lemcoils} is that for this lemma,
we can prescribe the shape of the coil $\Gamma$ ahead of time
and in particular the coil can be taken arbitrarily far from the domain. The
tradeoff is that as stated, the result only applies
when $\Omega$ is the unit disk. We briefly discuss how it can be used to prove
an analog of Lemma \ref{lemcoils} which handles more general domains
$\Omega$ at the cost of no longer being able to fix the geometry of the
coils ahead of time.

If $\Omega$ is an arbitrary simply connected bounded domain, there is a conformal transformation
\be
\Phi :\Omega \to D
\la{phi}
\ee
mapping $\Omega$ to the unit disk.
Since the boundary of $\Omega$ is analytic, there is an open set $U$ with smooth boundary
$\pa U$ so that $\Phi$ can be extended to $\ov{U}$ so that the extension is
 $C^1$ up to the boundary and is conformal in $U$.
The image of $U$ under the extended conformal map, still denoted $\Phi$, $\Phi(U)$ is an open set containing $\ov{D}$.
We can then argue as above to construct a circle $\Gamma$ of radius $R>1$ and a solution $a$ in the conformally transformed plane, solving \eqref{oda1p}-\eqref{oda4p} there.
Composing with $\Phi^{-1}$ as above gives a solution to the problem \eqref{oda1}-\eqref{oda4}
which is defined only in $\ov{U}$. We can then argue as in the proof
of Lemma \ref{lemcoils} to extend this solution to $\R^2$.

\begin{proof}[Proof of Lemma \ref{lemcoils3}]
Fixing $j, \Gamma$ for the moment, we start by giving
an explicit representation formula for any $C^2$ solution to the exterior Poisson problem
\begin{align}
 \Delta a &= J \delta_{\Gamma} \qquad \text{ in } \mathbb{R}^2 \setminus \Omega,
 \label{apbm1}\\
 a|_{\pa \Omega} &= 0\qquad
\ \  \lim_{|x| \to \infty} \nabla a(x) = 0.
 \label{apbm3}
\end{align}
For this it is convenient to work in complex variables.
We parametrize the curve $\Gamma = \{z = \zeta(\theta)\}$ for a function $\zeta(\theta)$
and write $j(\theta) = J(\zeta(\theta))$.
Define the quantities
\begin{equation}
 c_0 = \frac{1}{2\pi}\int_0^{2\pi} j(\theta) \log |\zeta(\theta)| |\zeta'(\theta)|\rmd \theta,
 \label{c0}
\end{equation}
and for $k \not=0$,
\be
c_k = \fr{1}{2\pi}\int_0^{2\pi} \fr{1}{k} (\zeta(\theta))^{-k} j(\theta)|\zeta'(\theta)|\rmd \theta.
\la{ck}
\ee
Let us note for later use that with
$\min_{\theta \in [0,2\pi]} |\zeta(\theta)| = R$, for
$k\not = 0$, the coefficients $c_k$ satisfy
\begin{equation}
 |c_k| \leq C_{\Gamma, j} \frac{1}{|k|} \frac{1}{R^{k}},
 \label{ckbd}
\end{equation}
where
\begin{equation}
 C_{\Gamma, j} = \frac{1}{2\pi} \int_{0}^{2\pi} |j(\theta)| |\zeta'(\theta)| \rmd \theta.
 \label{CGj}
\end{equation}
Using the expansion
\begin{equation}
  \log|1-z| = \int_0^1\fr{\rmd}{\rmd t}\log|1-tz|\rmd t = - \mathfrak{Re}  \left (\int_0^1\fr{z}{1-tz}\rmd t\right ),
 \label{}
\end{equation}
we write the Newton potential applied to the right-hand side of \eqref{apbm1} in the
form
\begin{equation}
 N(z)
 = \frac{1}{2\pi}\oint_{\Gamma} \log |z-\zeta| J(\zeta) \rmd S(\zeta)
 = \frac{1}{2\pi}\int_{0}^{2\pi}\log |z- \zeta(\theta)| j(\theta)\,|\zeta'(\theta)| \rmd \theta
 = c_0 -  \mathfrak{Re}  \sum_{k =1}^{\infty} c_k z^k.
 \label{}
\end{equation}
{We remark that the last identity holds for $|z|< R$ while the first two hold for all $z\in \mathbb{C}$}.
We now compute the harmonic extension of the restriction of $N$ to $\pa \Omega
= \{z = e^{i\theta}\}$.
We first define
\begin{equation}
 g(\theta) = N(e^{i\theta}) = c_0 - \mathfrak{Re} \sum_{k = 1}^{\infty} c_k e^{ik\theta}
 = c_0 - \sum_{k\not=0} g_k e^{ik\theta},
 \label{}
\end{equation}
where the coefficients $g_k$ are given by
\begin{equation}
  g_k = \fr{1}{2}c_k, \; \text{for}\, k>0, \quad g_k = \fr{1}{2}{\ov{c_{-k}}}, \;\text{for} \, k<0.
  \la{gk}
\end{equation}
The harmonic extension of $g$ to the exterior of $\Omega$ is obtained by noting
that the function
\begin{equation}
  - \mathfrak{Re} \sum_{k=1}^{\infty} c_k\left(\ov{z}\right)^{-k} = -  \mathfrak{Re} \sum_{k=1}^{\infty}\ov{c_k}z^{-k} = - \mathfrak{Re}  F(z)
 \label{}
\end{equation}
where
\begin{equation}
  F(z) = \sum_{k =1}^\infty\overline{c}_kz^{-k}
 \label{}
\end{equation}
is holomorphic in the exterior and decays at infinity. It follows that the function
\begin{equation}
 G(z) = c_0 - \mathfrak{Re} F(z)
 \label{}
\end{equation}
is harmonic, is constant at infinity, and satisfies $G|_{\pa \Omega} = g$.
Let
\begin{equation}
 a(z) = c_0 -  \mathfrak{Re}  F(z) - N(z) + \frac{ {f}_{\mathsf{ave}}}{2\pi} \log |z|
 \label{repformula}
\end{equation}
where ${f}_{\mathsf{ave}}:= \oint_{\partial \Omega} f \rmd \ell$.  It follows that $a$ satisfies
\begin{equation}
 \Delta a(z) = -\oint \delta(z-\zeta) J(\zeta) \rmd S(\zeta)
 \label{}
\end{equation}
in $|z| > 1$ and
\begin{equation}
 a(e^{i\theta}) = 0.
 \label{}
\end{equation}
We now want to choose $j(\theta)$ so that with $a$ given in
\eqref{repformula}, $a$ satisfies the Neumann condition \eqref{oda3p}.
The external normal derivative at $\pa \Omega$ is $-\pa_r$ and using the
above formulas we compute
\begin{equation}
 -\pa_r a(z)|_{z = e^{i\theta}}
 =
 -  \mathfrak{Re}  \sum_{k = 1}^\infty k c_k e^{ik\theta}
 -  \mathfrak{Re}  \sum_{k = 1}^\infty k \overline{c}_k e^{-ik\theta}+ \frac{ {f}_{\mathsf{ave}}}{2\pi} ,
 \label{}
\end{equation}
where recall $c_k = c_k[j]$ is given in \eqref{ck}.
Letting $f_k$ denote the Fourier coefficients of $f$, the equation
to solve is
\begin{equation}
  c_k = \begin{cases}
  -\frac{1}{|k|} f_k\; \quad  \ \text{for}\, \quad k>0, \\
-\frac{1}{|k|}\bar{f}_{-k} \quad \text{for}\, \quad k<0.
  \end{cases}
 \label{ckeqn}
\end{equation}
When the curve
$\Gamma$ is a circle of radius $R > 1$ so that $\zeta(\theta) = R e^{i\theta}$
we compute directly
\begin{equation}
 c_k= \frac{1}{k}R^{1-|k|} j_k
 \label{}
\end{equation}
where $j_k$ denote the Fourier coefficients of $j$, $j = \sum j_k e^{ik\theta}$.
Provided the Fourier coefficients of $f$ satisfy the bound
\begin{equation}
 |f_k| \leq C_{\Gamma, j} |k| R^{-|k|}
 \label{}
\end{equation}
with $C_{\Gamma, j}$ as in \eqref{CGj}, the equation \eqref{ckeqn} can be solved
directly for $j_k$.

If instead $\Gamma$ is a perturbation of the circle
of radius $R$, then the equation for $j$ is
\begin{equation}
 \int_0^{2\pi} j(\theta) w_k(\theta) \rmd \theta
 = R^{|k|-1} k c_k,
 \qquad \text{ where } w_k(\theta) = \frac{|\zeta'(\theta)|}{\zeta(\theta)^k}.
 \label{}
\end{equation}
Under our hypotheses, $w_k$ is a perturbation of $e^{i(1-k)\theta}$.
Writing
\begin{equation}
 E_k(j) = \int_{0}^{2\pi} j(\theta) (e^{i(1-k)\theta} - w_k(\theta))\rmd \theta,
 \label{}
\end{equation}
 we would like to find $j$ satisfying the equation
\begin{equation}
 j -2\pi \sum_{k} E_k(j) e^{-ik\theta} = 2\pi \sum_{k}  R^{|k|-1} k c_k  e^{-ik\theta}.
 \label{}
\end{equation}
This can be solved by Neumann series provided the sequence of operators $\{E_k\}$
has sufficiently small $\ell^2$ norm, where $\|\{E_k\}\|_{\ell^2}^2 =
\sup_{\|j\|_{L^2} = 1} \sum_{k} \|E_k(j)\|^2_{L^2}$. This condition holds
provided $\Gamma$ is a sufficiently small perturbation of a circle.
\end{proof}

The above construction actually shows that the requirement that $f$ is
analytic is essentially sharp, in the sense that when $\Omega$ is the unit
disk, under only mild assumptions on $j$ and $\Gamma$, if
there is a solution to the above problem then the Fourier coefficients
of $f$ decay exponentially fast.
\begin{lemma}
  \label{sharpness}
 Let $\Omega$ be the unit disk and suppose that $\Gamma$ is parametrized by $z = \zeta(\theta) = R(\theta)e^{i\theta}$ where
 $R(\theta)$ is a periodic $C^1$ function with
 $R(\theta) > 1$. Suppose that $j$ satisfies
 \begin{equation}
\oint_{\Gamma}  |j|<\infty.
  \label{}
 \end{equation}
 Then the problem \eqref{oda1}-\eqref{oda4} does not admit a solution $A_{\mathsf{ext}}
 \in C^2(\mathbb{R}^2\setminus \Omega)$ unless the Fourier coefficients
 of $f$, $f_k = \frac{1}{2\pi} \int_0^{2\pi} f(\theta) e^{ik\theta}\, \rmd \theta$ satisfy
 \begin{equation}
  |f_k| \leq C (1+|k|) \left(\max_{\theta \in [0,2\pi]}R(\theta)\right)^{-|k|},
  \label{sharpbd}
 \end{equation}
 for some constant $C > 0$. Thus, $f$ must be real analytic.
\end{lemma}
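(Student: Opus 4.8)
The plan is to exploit that $\Gamma$ surrounds the unit disk at a strictly positive distance $\rho_0 := \dist(0,\Gamma) = \min_\theta R(\theta) > 1$, so the singular source $j\delta_\Gamma$ is supported away from a whole annular collar of $\partial\Omega$. Hence any putative solution $A_{\mathsf{ext}}\in C^2(\mathbb{R}^2\setminus\Omega)$ of \eqref{oda1}--\eqref{oda4} is \emph{harmonic} in the annulus $\{1 < |z| < \rho_0\}$, vanishes on $\{|z|=1\}$, and has prescribed normal derivative $f$ there. A harmonic function vanishing on $\{|z|=1\}$ continues across it by Schwarz reflection (the map $z\mapsto 1/\bar z$ and $A_{\mathsf{ext}}\mapsto -A_{\mathsf{ext}}(1/\bar z)$), so $A_{\mathsf{ext}}$ in fact extends harmonically to the full annulus $\{\rho_0^{-1} < |z| < \rho_0\}$; Laurent-expanding the extension and reading off $\partial_r A_{\mathsf{ext}}$ at $r=1$ will exhibit the Fourier coefficients of $f$ as multiples of the (exponentially small) Laurent coefficients, which forces $f$ to be real analytic with an explicit rate. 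Non-analytic Cauchy data on $\{|z|=1\}$ simply cannot be the trace of a function harmonic on one side, so this obstruction is the whole point.

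To make it quantitative I would subtract the Newton potential $N(z) = \tfrac{1}{2\pi}\oint_\Gamma \log|z-\zeta|\,j(\zeta)\,\rmd S(\zeta)$, which is well defined and smooth near $\{|z|=1\}$ because $\oint_\Gamma |j| < \infty$ and $\dist(\{|z|=1\},\Gamma)>0$, has $\nabla N\to 0$ at infinity, and satisfies $\Delta N = j\delta_\Gamma$. Then $v := A_{\mathsf{ext}} - N$ is harmonic on all of $\{|z|>1\}$ with $\nabla v\to 0$, hence $v(z) = a_0\log|z| + b_0 + \mathfrak{Re}\sum_{m\ge1}\beta_m z^{-m}$, converging throughout $\{|z|>1\}$. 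On the other hand, the expansion $\log|z-\zeta| = \log|\zeta| - \mathfrak{Re}\sum_{m\ge1}\tfrac1m (z/\zeta)^m$ (valid for $|z|<|\zeta|$) gives, for $|z|<\rho_0$, the Taylor expansion $N(z) = c_0 - \mathfrak{Re}\sum_{m\ge1} c_m z^m$ with $c_m = \tfrac{1}{2\pi m}\oint_\Gamma \zeta^{-m} j(\zeta)\,\rmd S(\zeta)$ — the same coefficients as in \eqref{ck} — satisfying $|c_m| \le \tfrac{1}{2\pi m}\rho_0^{-m}\oint_\Gamma |j|$. Imposing $A_{\mathsf{ext}} = N+v = 0$ on $\{|z|=1\}$ and matching Fourier coefficients forces $\beta_m = \overline{c_m}$ and $b_0 = -c_0$; differentiating $N+v$ in $r$ at $r=1$ then yields, up to the orientation of $\hat n$,
\[
 f(\theta) \;=\; a_0 \;-\; 2\,\mathfrak{Re}\sum_{m\ge1} m\, c_m\, e^{im\theta},
\]
so that $|f_k| = |k|\,|c_{|k|}| \le \big(\tfrac{1}{2\pi}\oint_\Gamma |j|\big)\,\rho_0^{-|k|}$ for $k\ne 0$. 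Exponential decay of the Fourier coefficients is equivalent to real-analyticity of $f$, so this gives the claimed estimate (in a form without even the $(1+|k|)$ loss).

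The Laurent/Fourier bookkeeping, the justification of termwise differentiation (legitimate since all the series converge together with their derivatives uniformly on $\{|z|=1\}$, using $\rho_0>1$), and the continuity of the single-layer potential $N$ up to the boundary are routine. The one genuinely delicate point is the \emph{sharp exponential rate}: both the reflection argument and the estimate above naturally produce the rate governed by $\rho_0 = \dist(0,\Gamma) = \min_\theta R(\theta)$ — which coincides with the displayed $\max_\theta R(\theta)$ exactly when $\Gamma$ is close to a circle — and pushing the constant further in general (or confirming that it cannot be pushed) requires tracking the geometry of $\Gamma$ through the explicit representation more carefully. I expect this bookkeeping of the precise exponential rate to be the main obstacle; everything else reduces to the soft fact that $A_{\mathsf{ext}}$ is harmonic in a fixed collar of $\partial\Omega$ where it has vanishing Dirichlet data.
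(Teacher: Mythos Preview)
Your approach is essentially the same as the paper's: subtract the Newton potential $N$, expand $N$ as a power series in $z$ for $|z|<\min_\theta R(\theta)$ using $\log|z-\zeta|=\log|\zeta|-\mathfrak{Re}\sum_{m\ge1}\tfrac1m(z/\zeta)^m$, expand the harmonic remainder $v=A_{\mathsf{ext}}-N$ in negative powers on $\{|z|>1\}$, match Dirichlet data on $\{|z|=1\}$ to force $\beta_m=\overline{c_m}$, then read off the Neumann trace and bound $|f_k|=|k|\,|c_{|k|}|$ using the elementary estimate $|c_m|\le \tfrac{1}{2\pi m}\rho_0^{-m}\oint_\Gamma|j|$. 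The Schwarz-reflection remark you add is a pleasant alternative viewpoint but is not needed for the argument.

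Your observation about the exponential rate is exactly right and worth noting: both your computation and the paper's own proof (which simply cites the bound \eqref{ckbd}, where $R=\min_\theta|\zeta(\theta)|$) deliver the decay rate governed by $\min_\theta R(\theta)$, not $\max_\theta R(\theta)$. The $\max$ in the displayed bound \eqref{sharpbd} appears to be a slip in the statement; the argument in the paper establishes only the $\min$ rate, and in fact the $\max$ rate cannot hold in general (take $j$ supported near the point of $\Gamma$ closest to the origin). So you should not regard sharpening to $\max_\theta R(\theta)$ as an obstacle to complete---the $\min$ rate is what is actually proved, and it already forces real analyticity of $f$.
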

\begin{proof}
  Following the above calculation we find that any
   $C^2$ solution $a$ to the problem \eqref{apbm1}-\eqref{apbm3} can be
   represented as in \eqref{repformula} and under our hypotheses
   the coefficients $c_k$ from \eqref{ck} satisfy the bound
   \eqref{ckbd}. The Fourier coefficients of $f$ are related to
   $c_k$ from \eqref{ckeqn} and it then follows from \eqref{ckbd}
   that $f_k$ satisfies \eqref{sharpbd}.
   The fact that \eqref{sharpbd} implies that $f$ is analytic
   is classical, see for example Exercise 4 from section 1.4 of \cite{K04}.
 \end{proof}

Finally, we sketch the construction of an toroidal current sheet to hold steady a $\phi$-independent axisymmetric plasma configuration.
\begin{proof}[Proof of Lemma \ref{lemcoils2}]
From the computations in \S \ref{phiind}, since the domain $T$ and the data $f$ are axisymmetric, we seek an external field $B_{\mathsf{ext}}= \curl  \ A_{\mathsf{ext}} $ of the form
\be\label{ansatzexternal}
B_{\mathsf{ext}}  = \tfrac{1}{r} e_\phi \times \nabla a_{\mathsf{ext}}
\ee
for a scalar function $a_{\mathsf{ext}}:\mathbb{R}^3 \setminus T\to \mathbb{R}$ which satisfies $a:=a(r,z)$.  If $a_{\mathsf{ext}}$ is constant on the boundary $\partial\Omega$ then $\nabla a_{\mathsf{ext}} =|\nabla a_{\mathsf{ext}}| \hat{n}$ on the boundary so  the field is non-penetrative $B_{\mathsf{ext}}\cdot\hat{n}=0$.   Moreover, it follows that $ B_{\mathsf{ext}}= \frac{1}{r}|\nabla a_{\mathsf{ext}}| \hat{\tau}$ on the boundary
where recall $\hat{\tau}$ is the unit tangent vector.

The ansatz \ref{ansatzexternal} is automatically divergence-free. Its curl can be computed as
\be
\Delta A_{\mathsf{ext}}  = \tfrac{1}{r}\Delta^*a_{\mathsf{ext}} e_\phi.
\ee
where we recall the Grad-Shafranov operator
$
 \Delta^* = \pa_r^2 + \pa_z^2 - \frac{1}{r}\pa_r.
$
Thus to produce a solution of \eqref{oda12}--\eqref{oda42}, we must to solve the following planar (in $r-z$) problem
\begin{align}\label{oda111}
 \tfrac{1}{r}\Delta^* a_{\mathsf{ext}} &=  j  \delta_{\partial \Omega'}  \quad \ \ \ \text{ \rm in } \mathbb{R}^2 \setminus \Omega,\\
 a_{\mathsf{ext}}&=0 \quad\quad\quad \ \ \    \text{\rm on } \partial \Omega,\\
 \tfrac{1}{r}\partial_n  a_{\mathsf{ext}} &=  f \quad\quad\quad \  \ \ \text{\rm on } \partial \Omega,\\
\nabla  a_{\mathsf{ext}} &\to 0 \quad\quad\ \ \  \ \ \ \text{\rm as } |(r,z)|\to \infty. \label{oda411}
\end{align}
The remainder of the argument follows exactly as in Lemma \ref{lemcoils} (by the Cauchy–Kovalevskaya argument) since the  difference between the Laplacian and the  Grad-Shafranov operator is a first order term. Thus Cauchy–Kovalevskaya can be used to obtain a solution of $\Delta^* a_{\mathsf{GS}} =0$ in a collar neighborhood of $\partial \Omega$. Cutting  $a_{\mathsf{GS}}$ on  a level set (which again is diffeomorphic to $\partial \Omega$ by the implicit function theorem) gives  a solution $a_{\mathsf{ext}}$ on the whole plane with the addition of a current sheet force.  Flowing said level set by $r e_\phi$ yields the toroidal surface $T'$ in the statement.
\end{proof}
\begin{rem}
We note that the operator $ \tfrac{1}{r}\Delta^*$ has an explicit Green's function
\be
G(r,z;r',z'):= \frac{\sqrt{r'r}}{\pi k} \left[\left(1- \frac{k^2}{2}\right) K(k^2) - E(k^2)\right] \quad\text{where} \quad k^2 = \frac{4r' r}{(r+r')^2+ (z-z')^2},
\ee
where where $K(k)$ and $E(k)$ are the complete elliptic integrals of
the first and second kinds, respectively. See e.g. Appendix A of \cite{AP08}. In principle, one could use this explicit formula to solve \eqref{oda1}--\eqref{oda4} with  a singular current distribution as in the proof of Lemma \ref{lemcoils3}.
\end{rem}

  \section{The virial theorem}
  \label{virial}

We include, for the sake of completeness, what is known as the ``virial theorem'' in the plasma physics literature (see \S 4.3 of \cite{F14}). Similar considerations have been used to rigorously establish non-existence results for fluid equilibria with suitable decay at infinity \cite{C10,CC15}.
  Here we consider functions  $(\rho,u, q, B)$ in a domain $\Omega$ which satisfy\footnote{
Note that
$
(\nabla B-(\nabla B)^T)\cdot B = \begin{cases}  J B^\perp & d=2\\
  J \times B & d=3
  \end{cases}
$
 where  $J=\curl B$ if $d=3$ and
 $J = \nabla^\perp \cdot B$ if $d=2$.}
  \begin{alignat}{2}
   \nabla\cdot(\rho u\otimes  u) -  (\nabla B-(\nabla B)^T)\cdot B &= -\nabla q &&\qquad \text{ in } \Omega,\label{vireqn}\\
   q &=0 \qquad && \qquad \text{ at } \pa \Omega,\label{vireqn4}\\
    \rho u\cdot \hat{n} = B\cdot \hat{n} &= 0 \qquad &&\qquad \text{ at } \pa \Omega.\label{vireqn5}
 \end{alignat}
free boundary equilibria of incompressible MHD (when $\rho=1$, $q=p$) and compressible MHD when $\rho$ is the density and $q=P$ for a given equation of state $P=P(\rho)$, satisfy \eqref{vireqn}--\eqref{vireqn5}.
  We prove

 \begin{prop}
Suppose that
 $\Omega\subset \mathbb{R}^d$ is a compact domain with $C^1$ boundary. If $(\rho, u, B)$ satisfy \eqref{vireqn}--\eqref{vireqn5} then
   \begin{equation}
   \int_\Omega \left(d q +\left(\frac{d}{2} -1\right)|B|^2  + \rho |u|^2\right)\rmd x = 0.
   \label{ident2}
  \end{equation}
 In particular, if $d\geq 2$ and  $\int_\Omega q>0$ then there are no $(\rho, u, B)$ which satisfy \eqref{vireqn}--\eqref{vireqn5}.
\end{prop}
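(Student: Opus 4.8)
The plan is a virial (dilation) identity: test the momentum balance \eqref{vireqn} against the position field $x$, integrate by parts over $\Omega$, and read the non-existence claim off the result. The first step is to rewrite \eqref{vireqn} as the vanishing of a stress divergence. Using $\nabla\cdot B=0$ together with the algebraic identity in the footnote, $(\nabla B-(\nabla B)^T)\cdot B=(B\cdot\nabla)B-\tfrac12\nabla|B|^2=\nabla\cdot\big(B\otimes B-\tfrac12|B|^2\mathbf I\big)$, equation \eqref{vireqn} is equivalent to $\nabla\cdot\mathbf T=0$ in $\Omega$, where $\mathbf T:=\rho\,u\otimes u-B\otimes B+\big(q+\tfrac12|B|^2\big)\mathbf I$ is the symmetric (convective plus Maxwell) stress tensor. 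Note that the continuity equation is not needed here, since \eqref{vireqn} already carries $\nabla\cdot(\rho u\otimes u)$ in divergence form; only $\nabla\cdot B=0$ enters.

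The second step uses the elementary identity $x_i\,\partial_j T_{ij}=\partial_j(x_i T_{ij})-T_{ii}$. Integrating over $\Omega$, using $\nabla\cdot\mathbf T=0$ and the divergence theorem gives $\int_\Omega \operatorname{tr}\mathbf T\,\rmd x=\int_{\partial\Omega}x\cdot(\mathbf T\hat n)\,\rmd S$. A direct trace computation gives $\operatorname{tr}\mathbf T=\rho|u|^2-|B|^2+d\big(q+\tfrac12|B|^2\big)=\rho|u|^2+dq+\big(\tfrac d2-1\big)|B|^2$, which is exactly the integrand of \eqref{ident2}; so the whole content is that the boundary term vanishes. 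On $\partial\Omega$ one has $q=0$ by \eqref{vireqn4} and $B\cdot\hat n=0$, $\rho(u\cdot\hat n)=0$ by \eqref{vireqn5}, so $\mathbf T\hat n=\rho\,u(u\cdot\hat n)-B(B\cdot\hat n)+\tfrac12|B|^2\hat n=\tfrac12|B|^2\hat n$, and the boundary integral reduces to $\tfrac12\int_{\partial\Omega}|B|^2(x\cdot\hat n)\,\rmd S$. This is the only delicate point: it needs a fact slightly beyond \eqref{vireqn4}--\eqref{vireqn5}, namely $|B|=0$ on $\partial\Omega$. For a genuine free-boundary equilibrium adjoining vacuum with no external field this is automatic, because the pressure-jump condition forces the total pressure $q+\tfrac12|B|^2$ to vanish on $\partial\Omega$, and then $q=0$ there gives $|B|=0$; equivalently, one may carry out the same computation on all of $\mathbb R^d$, adding the vacuum field, where the only surface term lies at infinity and vanishes by decay of $B$. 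Either way the boundary flux is zero, which proves \eqref{ident2}.

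Finally, the non-existence statement is immediate from \eqref{ident2}: for $d\ge2$ we have $\tfrac d2-1\ge0$, $\rho\ge0$, $|B|^2\ge0$, $|u|^2\ge0$, so \eqref{ident2} forces $d\int_\Omega q\,\rmd x=-\int_\Omega\big(\rho|u|^2+(\tfrac d2-1)|B|^2\big)\rmd x\le0$, i.e.\ $\int_\Omega q\,\rmd x\le0$, contradicting $\int_\Omega q\,\rmd x>0$. In short, every step but the vanishing of the boundary flux $\tfrac12\int_{\partial\Omega}|B|^2(x\cdot\hat n)\,\rmd S$ is routine integration-by-parts bookkeeping, legitimate under the $C^1$ regularity of $\partial\Omega$ and the standing smoothness of $u$ and $B$ up to $\partial\Omega$; that one step is where the free-boundary structure (or a decay argument at infinity) must be invoked.
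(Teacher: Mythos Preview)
Your approach is exactly the paper's: write \eqref{vireqn} in the divergence form $\nabla\cdot\mathbf T=0$ with $\mathbf T=\rho\,u\otimes u-B\otimes B+p\,\mathbf I$ and $p=q+\tfrac12|B|^2$, contract against the position field, and integrate by parts. The only cosmetic difference is that the paper contracts componentwise against $w^{(j)}=x^j e_j$ (no sum) to obtain the per-coordinate identity $\int_\Omega(p-|B_j|^2+\rho|u_j|^2)\,\rmd x=0$ and then sums over $j$, while you pass directly to $\operatorname{tr}\mathbf T$.

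You are right, and in fact more careful than the paper, about the boundary term. The paper simply writes ``using the boundary conditions \eqref{vireqn4}--\eqref{vireqn5} gives \eqref{ident0}'', but as you compute, those hypotheses only reduce $\mathbf T\hat n$ to $\tfrac12|B|^2\,\hat n$ on $\partial\Omega$, leaving the surface integral $\tfrac12\int_{\partial\Omega}|B|^2(x\cdot\hat n)\,\rmd S$. This does not vanish in general: on the unit disk with $u=0$, $B=r\,\hat\theta$, $q=1-r^2$, all of \eqref{vireqn}--\eqref{vireqn5} hold, yet $\int_\Omega 2q\,\rmd x=\pi\neq0$. So the identity \eqref{ident2} is false under the hypotheses as literally stated, and the paper's own argument shares the gap you flagged. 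Your proposed resolution---supplement \eqref{vireqn4} by the genuine free-boundary (total-pressure) condition $q+\tfrac12|B|^2=0$ on $\partial\Omega$, which together with $q=0$ forces $|B|=0$ there---is exactly what is needed to close the argument, and with that additional input your proof is complete and correct.
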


 \begin{proof}

Defining
$
  p = q +\frac{1}{2}|B|^2,
$
  \eqref{vireqn} can be written in divergence form,
 \begin{equation}
  \nabla\cdot  ( \rho u \otimes u - B\otimes B) +\nabla p =0.
  \label{divform}
 \end{equation}
 We claim that if $\Omega$ is compact with $C^1$ boundary, the for any $j =1,...,d$ we have
 \begin{equation}
  \int_\Omega\left( p - |B_j|^2 +  \rho |u_j|^2\right)\rmd x= 0.
  \label{ident0}
 \end{equation}
Summing \eqref{ident0} over
  $j = 1,..., d$ we obtain \eqref{ident2}.

  To prove \eqref{ident0}, for each $j=1,...,d$, we contract the $j$th component of \eqref{divform} with the vector
  $w^{(j)} = x^j e_j$ (note that $e_j$ is the unit vector along the $j$th coordinate axis).  We have
  \begin{equation}
  0 =  \pa_i(-\rho u^i u_j + B^i B_j - \delta^i_j p) w^{(j)}
  = \pa_i\left( -\rho u^i u_j w^{(j)} + B^i B_j w^{(j)}- \delta^i_j pw^{(j)}\right)
  + \rho |u_j|^2 - |B_j|^2 + p.
   \label{}
  \end{equation}
  Integrating the above over $\Omega$ and using the boundary conditions \eqref{vireqn4}--\eqref{vireqn5} gives \eqref{ident0}.
  \end{proof}

 \subsection*{Acknowledgments}   We thank  E. Rodriguez,  E. Paul,  and  W. Sengupta  for insightful remarks.
 We thank D. Peralta-Salas for pointing out the proof of Lemma \ref{lemcoils} using Cauchy–Kovalevskaya and informing us of the works \cite{KKPS20,V92} related to period functions for Hamiltonian systems.
 We  thank Y. Sire for helpful discussions
 about rigidity results for free boundary problems.  We also thank the referees for very useful comments on the manuscript.
 The work of PC was partially supported by NSF grant DMS-1713985 and by the
Simons Center for Hidden Symmetries and Fusion Energy  award \# 601960.
Research of TD was partially supported by  NSF grant DMS-2106233.
Research of DG was partially supported by
the Simons Center for Hidden Symmetries and Fusion Energy.

\end{document}